\numberwithin{theorem}{section}
\newcommand{\TheTitle}{A Note on the Quasi-Stationary Distribution\texorpdfstring{\\}{}
of the Shiryaev Martingale on the Positive Half-Line}
\newcommand{\TheAuthors}{A. S. Polunchenko, S. Mart\'{i}nez, and J. San Mart\'{i}n}
\title{{\TheTitle}\thanks{Submitted to the editors DATE.
\funding{The effort of A.~S.~Polunchenko was partially supported by the Simons Foundation via a Collaboration Grant in Mathematics under Award \#\,304574.}}}
\author{
    Aleksey S. Polunchenko%
    \thanks{Department of Mathematical Sciences, State University of New York at Binghamton, Binghamton, New York, USA
    (\email{aleksey@binghamton.edu}, \url{http://people.math.binghamton.edu/aleksey}).}
\and
    Servet Mart\'{i}nez%
    \thanks{CMM-DIM, UMI-CNRS 2807, Basal PBF-03, Universidad de Chile, Santiago, Chile
    (\email{smartine@dim.uchile.cl}, \email{jsanmart@dim.uchile.cl}).}
\and
    Jaime San Mart\'{i}n%
    \footnotemark[3]
}
\renewcommand{\Pr}{\mathbb{P}} 
\DeclareMathOperator{\EV}{\mathbb{E}} 
\renewcommand{\le}{\leqslant} 
\renewcommand{\ge}{\geqslant}
\newcommand{\abs}[1]{\left\vert#1\right\vert}
\DeclareMathOperator{\One}{\mathchoice{\rm 1\mskip-4.2mu l}{\rm 1\mskip-4.2mu l}{\rm 1\mskip-4.6mu l}{\rm 1\mskip-5.2mu l}}
\newcommand{\indicator}[1]{{\One_{\left\{#1\right\}}}}
\begin{document}

\maketitle

\begin{abstract}
We obtain a closed-form formula for the quasi-stationary distribution of the classical Shiryaev martingale diffusion considered on the positive half-line  $[A,+\infty)$ with $A>0$ fixed; the state space's left endpoint is assumed to be the killing boundary. The formula is obtained analytically as the solution of the appropriate singular Sturm--Liouville problem; the latter was first considered in~\cite[Section~7.8.2]{Collet+etal:Book2013}, but has heretofore remained unsolved.
\end{abstract}

\begin{keywords}
  Quasi-stationary distribution, Shiryaev martingale, Whittaker functions
\end{keywords}

\begin{AMS}
  60J60, 60F99, 33C15
\end{AMS}

\section{Introduction and problem formulation}
\label{sec:intro}

This work centers around the stochastic process known as the Shiryaev martingale. Specifically, the latter is defined as the solution $(X_{t})_{t\ge0}$ of the stochastic differential equation (SDE)
\begin{equation}\label{eq:ShiryaevMartingale-def}
dX_t
=
dt+ X_t\,dB_t
\;\;\text{with}\;\;
X_0\triangleq x_0\ge0\;\;\text{fixed},
\end{equation}
where $(B_t)_{t\ge0}$ is standard Brownian motion (i.e., $\EV[dB_t]=0$, $\EV[(dB_t)^2]=dt$, and $B_0=0$); the initial value $X_0=x_0\ge0$ is sometimes also referred to as the headstart. The name ``Shiryaev martingale'' has apparently been introduced in~\cite[Section~7.8.2]{Collet+etal:Book2013} and is due to two reasons. The first reason is to acknowledge the fact that equation~\eqref{eq:ShiryaevMartingale-def} was first arrived at and extensively studied by Prof.~A.N.~Shiryaev in his fundamental work (see~\cite{Shiryaev:SMD61,Shiryaev:TPA63}) in the area of quickest change-point detection where the process $(X_t)_{t\ge0}$ has become known as the Shiryaev--Roberts detection statistic, and it is one of the ``central threads'' in the field. See also, e.g.,~\cite{Pollak+Siegmund:B85,Shiryaev:Bachelier2002,Feinberg+Shiryaev:SD2006,Burnaev+etal:TPA2009,Polunchenko:SA2017,Polunchenko:TPA2017}. The second reason is that the time-homogeneous Markov diffusion $(X_t)_{t\ge0}$ is easy to see to have the martingale property $\EV[X_t-x_0-t]=0$ for any $t\ge0$, i.e., the process $\{X_t-x_0-t\}_{t\ge0}$ is a zero-mean martingale.

The SDE~\eqref{eq:ShiryaevMartingale-def} is a special case of the more general SDE $dZ_t=(aZ_t+b)\,dt+Z_t\,dB_t$ where $Z_0=z_0\in\mathbb{R}$, $a\in\mathbb{R}$, and $b\in\mathbb{R}$ are fixed; note that the process $\{Z_t-z_0-t\}_{t\ge0}$ is not a zero-mean martingale, unless $a=0$ and $b=1$. The process $(Z_t)_{t\ge0}$ is sometimes called the Shiryaev process or the Shiryaev diffusion, for it, too, was arrived at and studied by Prof.~A.N.~Shiryaev in~\cite{Shiryaev:SMD61,Shiryaev:TPA63} in the context of quickest change-point detection. However, the Kolmogorov~\cite{Kolmogorov:MA1931} forward and backward equations corresponding to the Shiryaev process $(Z_t)_{t\ge0}$ have also arisen independently in areas far beyond quickest change-point detection, notably in mathematical physics, and, more recently, in mathematical finance. By way of example, in mathematical physics, the authors of~\cite{Monthus+Comtet:JPhIF1994,Comtet+Monthus:JPhA1996} dealt with $Z_t$ interpreting it as the position at time $t$ of a particle moving around in an inhomogeneous environment driven by a combination of random forces (e.g., thermal noise). Financial significance of the Shiryaev process has been understood, e.g., in~\cite{Geman+Yor:MF1993,Donati-Martin+etal:RMI2001,Linetsky:OR2004}, in relation to so-called arithmetic Asian options where $Z_t$ represents the option's price at time $t$. Moreover, the Shiryaev process also proved useful as a stochastic interest rate model. See, e.g.,~\cite{Geman+Yor:MF1993,Vanneste+etal:IME1994,DeSchepper+etal:IME1994,Milevsky:IME1997,DeSchepper+Goovaerts:IME1999}. Last but not least, the Shiryaev diffusion is also of interest in itself as a stochastic process, especially due to its close connection to geometric Brownian motion (which $Z_t$ is when $b=0$). See, e.g.,~\cite{Wong:SPMPE1964,Yor:AAP1992,Donati-Martin+etal:RMI2001,Dufresne:AAP2001,Schroder:AAP2003,Peskir:Shiryaev2006,Polunchenko+Sokolov:MCAP2016}.

The Shiryaev process---whether $(X_t)_{t\ge0}$ or $(Z_t)_{t\ge0}$---is typically considered either on a compact subset of the real line, or on the entire real line, or on one of the two half-lines $(-\infty,0]$ or $[0,+\infty)$. This work draws attention to the case when the state space for $(X_t)_{t\ge0}$ is the set $[A,+\infty)$ where $A>0$ is given. Specifically, we shall assume that $(X_t)_{t\ge0}$ is started off a point inside the interval $[A,+\infty)$, i.e., $X_0\triangleq x_0\in[A,+\infty)$, and then the process is let continue until it hits the lower boundary $A>0$ whereat the process is terminated. The question of interest is the process' long-term behavior, conditional that the process is not killed. More formally, consider the stopping time
\begin{equation}\label{eq:T-def}
S_A
\triangleq
\inf\{t\ge0\colon X_{t}=A\}\;\;\text{such that}\;\;\inf\{\varnothing\}=+\infty,
\end{equation}
where $A>0$ is given. The specific aim of this paper is to obtain an exact closed-form formula for the Shiryaev martingale's quasi-stationary distribution. Specifically, this distribution is defined as
\begin{equation}\label{eq:QSD-def}
Q_A(x)
\triangleq
\lim_{t\to+\infty}\Pr(X_t\le x|{S}_A>t)
\;\;\text{with}\;\;
q_A(x)
\triangleq
\dfrac{d}{dx}Q_A(x),
\;\text{where}\;
x\in[A,+\infty),
\end{equation}
and $X_0\triangleq x_0>A$ and $A>0$ are preset. The existence of this distribution has been previously asserted in~\cite[Section~7.8.2]{Collet+etal:Book2013}, which, to the best of our knowledge, is also where the very problem of finding either $Q_A(x)$ or $q_A(x)$ in a closed form was first formulated, but has heretofore remained unsolved. The solution we obtain analytically in Section~\ref{sec:qsd-solution} elucidates the general theory of quasi-stationary phenomena associated with killed one-dimensional Markov diffusions set forth in the seminal work of Mandl~\cite{Mandl:CMJ1961} and then further developed by Collet, Mart\'{i}nez and San Mart\'{i}n in~\cite{Collet+etal:AP1995,Martinez+SanMartin:JTP2001,Martinez+SanMartin:AP2004,Collet+etal:Book2013}; see also~\cite{Cattiaux+etal:AP2009}. The obtained formulae for $Q_A(x)$ or $q_A(x)$ complement those previously found by Polunchenko~\cite{Polunchenko:SA2017} for the case when the Shiryaev martingale is restricted to the interval $[0,A]$ with $A>0$ fixed.

\section{Preliminaries}
\label{sec:preliminaries}

For notational brevity, we shall henceforth omit the subscript ``$A$'' in ``$Q_A(x)$'' as well as in ``$q_A(x)$'', unless the dependence on $A$ is noteworthy. Also, for technical convenience and without loss of generality, we shall primarily deal with $q(x)$ rather than with $Q(x)$.

It is has already been established in the literature (see, e.g.,~\cite[Section~7.8.2]{Collet+etal:Book2013} or~\cite{Mandl:CMJ1961,Cattiaux+etal:AP2009}) that $q(x)$, formally defined in~\eqref{eq:QSD-def}, is the solution of a certain boundary-value problem composed of a second-order ordinary differential equation (ODE) considered on the half line $(A,+\infty)$, a normalization constraint, a set of boundary conditions along with a square-integrability restriction. Specifically, the ODE---which we shall refer to as the {\em master equation}---is of the form
\begin{equation}\label{eq:master-eqn}
\dfrac{1}{2}\dfrac{d^2}{dx^2}\big[x^2\,q(x)\big]
-
\dfrac{d}{dx}\big[q(x)\big]
=
-\lambda\,q(x),
\;\;x\in(A,+\infty),
\end{equation}
where $\lambda\ge0$ is the {\em smallest} eigenvalue of the differential operator
\begin{equation}\label{eq:Doperator-def}
\mathscr{D}
\triangleq
\dfrac{1}{2}\dfrac{\partial^2}{\partial x^2} x^2
-
\dfrac{\partial}{\partial x},
\end{equation}
which is the infinitesimal generator of the Shiryaev diffusion $(X_t)_{t\ge0}$ governed by SDE~\eqref{eq:ShiryaevMartingale-def}; we remark that the nonnegativity of $\lambda$ is {\em not} an assumption, and it will be formally asserted below that, in fact, $\lambda\in(0,1/8]$. It goes without saying that $\lambda$ is dependent on $A$, and, wherever necessary, we shall emphasize this dependence via the notation $\lambda_{A}$.

The relation
\begin{equation}\label{eq:lambdaA-kill-rate}
\lambda_{A}
=
-\lim_{t\to+\infty}\left\{\dfrac{1}{t}\Pr(S_A>t)\right\},
\end{equation}
where $S_A$ is the stopping time defined in~\eqref{eq:T-def}, lends $\lambda_A$ the flavor of the killing rate for $(X_t)_{t\ge0}$; cf.~\cite{Collet+etal:Book2013}. In fact, in~\cite{Collet+etal:Book2013}, the relation~\eqref{eq:lambdaA-kill-rate} is used to argue that $\lambda_A$ is an increasing function of $A>0$. Moreover, it is also established in~\cite[Section~7.8.2]{Collet+etal:Book2013} that $\lambda_A=1/8$ is guaranteed for $A\ge e^3\approx 20.0855369$. We shall see in the next section that $\lambda_A=1/8$ is attained for much smaller values of $A$, namely for $A\ge A^{*}\approx1.265857361$ with $A^{*}$ being the solution of a certain transcendental equation.

The normalization constraint that $q(x)$ is to satisfy is the natural requirement
\begin{align}\label{eq:QSD-pdf-norm-constraint}
\int_{A}^{+\infty}q(x)\,dx
&=
1,
\end{align}
which is merely the statement that $q(x)$, as a pdf supported on $[A,+\infty)$, must integrate to unity over its entire support.

It is easily checked that the state space's lower boundary $x=A>0$ is a regular absorbing boundary, and the upper boundary $x=+\infty$ is a natural boundary. Therefore, there is only one boundary condition to impose on $q(x)$, and this condition is at $x=A$, and it is as follows:
\begin{equation}\label{eq:QSD-pdf-bnd-cond-A}
q(A)
=
0,
\end{equation}
which, in ``differential equations speak'', is a Dirichlet-type boundary condition. While no boundary condition is required at $x=+\infty$, there is a certain square-integrability restriction required to hold for $q(x)$ around $x=+\infty$. This restriction will be explained below.

Subject to the absorbing boundary condition~\eqref{eq:QSD-pdf-bnd-cond-A}, the normalization constraint~\eqref{eq:QSD-pdf-norm-constraint}, and the square-integrability restriction yet to be discussed, the master equation~\eqref{eq:master-eqn} is a Sturm--Liouville problem. It is a singular problem, for the domain, i.e., the interval $[A,+\infty)$, is unbounded. The singular nature of the problem affects the spectrum $\{\lambda\}$ of the corresponding operator $\mathscr{D}$ given by~\eqref{eq:Doperator-def}. By virtue of the multiplying factor
\begin{equation}\label{eq:speed-measure}
\mathfrak{m}(x)
\triangleq
\dfrac{2}{x^2}\,e^{-\tfrac{2}{x}}
\end{equation}
the master equation can be brought to the canonical Sturm--Liouville form
\begin{equation}\label{eq:eigen-eqn-two}
\dfrac{1}{2}\dfrac{d}{dx}\left[x^2\,\mathfrak{m}(x)\,\dfrac{d}{dx}\varphi(x)\right]
=
-\lambda\,\mathfrak{m}(x)\,\varphi(x),
\end{equation}
where the unknown function $\varphi(x)$ is such that $q(x)\propto\mathfrak{m}(x)\,\varphi(x)$, i.e., $q(x)$ is a multiple of $\mathfrak{m}(x)\,\varphi(x)$. Hence, the operator $\mathscr{D}$ given by~\eqref{eq:Doperator-def} is equivalent to the Sturm--Liouville operator
\begin{equation}\label{eq:Goperator-def}
\mathscr{G}
\triangleq
\dfrac{1}{2\,\mathfrak{m}(x)}\dfrac{\partial}{\partial x}x^2\,\mathfrak{m}(x)\,\dfrac{\partial}{\partial x}
\end{equation}
with $\mathfrak{m}(x)$ given by~\eqref{eq:speed-measure}. Although operators $\mathscr{G}$ and $\mathscr{D}$ are essentially formal adjoints of one another, the former is more convenient to deal with because it is in a canonical Sturm--Liouville form, so that the general theory of Sturm--Liouville operators can be readily utilized to gain preliminary insight into the spectral characteristics of $\mathscr{G}$. It is evident from~\eqref{eq:Doperator-def} and~\eqref{eq:Goperator-def} that $\mathscr{G}$ and $\mathscr{D}$ have the same spectra, and that their corresponding eigenfunctions differ by a factor of $\mathfrak{m}(x)$ given by~\eqref{eq:speed-measure}.

The general theory of second-order differential operators or Sturm--Liouville operators (such as our operators $\mathscr{G}$ and $\mathscr{D}$ introduced above) is well-developed, and, in particular, the spectral properties of such operators are well-understood. The classical fundamental references on the subject are~\cite{Titchmarsh:Book1962},~\cite{Levitan:Book1950},~\cite{Coddington+Levinson:Book1955},~\cite{Dunford+Schwartz:Book1963}, and~\cite{Levitan+Sargsjan:Book1975}; for applications of the theory to diffusion processes, see, e.g., Ito and McKean~\cite[Section~4.11]{Ito+McKean:Book1974}, and especially Linetsky~\cite{Linetsky:HandbookChapter2007} who provides a great overview of the state-of-the-art in the field considered in the context of stochastic processes. For our specific problem, the general Sturm--Liouville theory immediately establishes that the eigenfunctions $\varphi(x)\equiv\varphi(x,\lambda)$ indexed by $\lambda$ of the operator $\mathscr{G}$ given~\eqref{eq:Goperator-def} form an orthonormal basis in the Hilbert space $L^2([A,+\infty),\mathfrak{m})$ of real-valued $\mathfrak{m}(x)$-measurable, square-integrable (with respect to the $\mathfrak{m}(x)$ measure) functions defined on the interval $[A,+\infty)$ equipped with the ``$\mathfrak{m}(x)$''-weighted inner product:
\begin{equation*}
\langle f,g\rangle_{\mathfrak{m}}
\triangleq
\int_{A}^{+\infty}\mathfrak{m}(x)\,f(x)\,g(x)\,dx.
\end{equation*}

More specifically, the foregoing means that if $\lambda^{(i)}$ and $\lambda^{(j)}$ are any two eigenvalues of $\mathscr{G}$, and $\varphi(x,\lambda^{(i)})$ and $\varphi(x,\lambda^{(j)})$ are the corresponding eigenfunctions, then
\begin{equation*}
\int_{A}^{+\infty}\mathfrak{m}(x)\,\varphi(x,\lambda^{(i)})\,\varphi(x,\lambda^{(j)})\,dx
=
\indicator{i=j},
\end{equation*}
where $\varphi(x,\lambda^{(i)})$ and $\varphi(x,\lambda^{(j)})$ are each assumed to be of unit ``length'', in the sense that $\|\varphi(\cdot,\lambda^{(i)})\|=1=\|\varphi(\cdot,\lambda^{(j)})\|$, with the ``length'' defined as
\begin{equation}\label{eq:eigfun-norm-def}
\|\varphi(\cdot,\lambda)\|^2
\triangleq
\int_{A}^{+\infty}\mathfrak{m}(x)\,\varphi^2(x,\lambda)\,dx.
\end{equation}

We are now in a position to state the square-integrability restriction on $q(x)$: it is the requirement that $\|\varphi(\cdot,\lambda)\|<+\infty$ for the very same $\lambda$ that is present in~\eqref{eq:master-eqn}.

To gain further insight into the spectral properties of the operator $\mathscr{G}$ we turn to the work of Linetsky~\cite{Linetsky:HandbookChapter2007} who introduces three mutually exclusive Spectral Categories of Sturm--Liouville operators, and establishes easy-to-use criteria to determine which specific category a given Sturm--Liouville operator falls under. The classification is based on the nature of the corresponding domain boundaries, viz. whether the boundaries are oscillatory or non-oscillatory. The classification criteria are given by~\cite[Theorem~3.3,~p.~248]{Linetsky:HandbookChapter2007}. Specifically, by appealing to~\cite[Theorem~3.3(ii),~p.~248]{Linetsky:HandbookChapter2007} it is straightforward to verify that our Sturm--Liouville problem belongs to Spectral Category II introduced in~\cite[Theorem~3.2(ii),~p.~246]{Linetsky:HandbookChapter2007}. This means that the spectrum $\{\lambda\}$ of the operators $\mathscr{G}$ and $\mathscr{D}$ is simple and nonnegative, i.e., all $\lambda\ge0$. Moreover, the spectrum is purely absolutely continuous in $(1/8,+\infty)$ where the $1/8$ is the spectrum cutoff point. Finally, since~\cite[Theorem~3.3(ii),~p.~248]{Linetsky:HandbookChapter2007} shows that $x=+\infty$ is a non-oscillatory boundary, the operators $\mathscr{G}$ and $\mathscr{D}$ may also have a finite set of simple eigenvalues inside the interval $[0,1/8]$, and these eigenvalues are determined entirely by the Dirichlet boundary condition~\eqref{eq:QSD-pdf-bnd-cond-A} or equivalently $\varphi(A,\lambda)=0$.

We conclude this section with a remark that $\lambda=0$ is not an option. Indeed, observe that in this case the function $\varphi(x,0)=K$ for some constant $K$ does solve~\eqref{eq:eigen-eqn-two} and is square-integrable with respect to the $\mathfrak{m}(x)$ measure given by~\eqref{eq:speed-measure}. However, the function $q(x)\propto \mathfrak{m}(x)$, although does solve~\eqref{eq:master-eqn}, is possible to normalize in accordance with the normalization constraint~\eqref{eq:QSD-pdf-norm-constraint} only if $K\neq0$. Yet, if $K\neq0$, then the absorbing boundary condition~\eqref{eq:QSD-pdf-bnd-cond-A} is impossible to fulfil, because, in view of~\eqref{eq:speed-measure}, no nontrivial multiple of $\mathfrak{m}(x)$ can be turned into zero at any finite $x=A>0$. On the other hand, if $K=0$, then the absorbing boundary condition~\eqref{eq:QSD-pdf-bnd-cond-A} is trivially satisfied, but the normalization constraint~\eqref{eq:QSD-pdf-norm-constraint} can never be. Therefore $\lambda=0$ is not an eigenvalue of the operator $\mathscr{D}$ given by~\eqref{eq:Doperator-def}, and its spectrum $\{\lambda\}$ lies entirely inside the interval $(0,1/8]$. This was previously conjectured in~\cite[Section~7.8.2]{Collet+etal:Book2013}. The strict positivity of the smallest eigenvalue of $\mathscr{D}$ enables us to enjoy all of the results already obtained in~\cite[Section~7.8.2]{Collet+etal:Book2013}, starting from the very fact that $q(x)$ must exist. In the next section this distribution will be expressed in a closed form through analytic solution of the corresponding Sturm--Liouville problem.

\section{The quasi-stationary distribution formulae}
\label{sec:qsd-solution}

The plan now is to fix $A>0$ and solve the master equation~\eqref{eq:master-eqn} analytically and thereby recover both $q_A(x)$ and $Q_A(x)$ in a closed form for all $x\in[A,+\infty)$. To that end, it is easier to deal with the equivalent equation~\eqref{eq:eigen-eqn-two}, and the first step to treat it is to apply the change of variables
\begin{equation}\label{eq:ux-var-def}
x\mapsto u=u(x)=\dfrac{2}{x},\;\text{so that}\;u\mapsto x=x(u)=\dfrac{2}{u}\;\text{and}\;\dfrac{dx}{x}=-\dfrac{du}{u},
\end{equation}
along with the substitution
\begin{equation}\label{eq:eigfun-substitution}
\varphi(x)\mapsto\varphi(u)\triangleq\dfrac{v(u)}{\sqrt{\mathfrak{m}(u)}}\propto \dfrac{1}{u}\,e^{\tfrac{u}{2}}\,v(u),
\end{equation}
to bring the equation to the form
\begin{equation}\label{eq:eigenfcn-eqn-whit-form}
v_{uu}(u)+\left\{-\frac{1}{4}+\frac{1}{u}+\frac{1/4-\xi^2/4}{u^2}\right\}v(u)
=
0,
\end{equation}
where
\begin{equation}\label{eq:xi-def}
\xi
\equiv
\xi(\lambda)
\triangleq
\sqrt{1-8\lambda}
\;\;\text{so that}\;\;
\lambda
\equiv
\lambda(\xi)
=
\dfrac{1}{8}(1-\xi^2),
\end{equation}
and $\xi\in[0,1)$ on account of $\lambda\in(0,1/8]$ concluded in the previous section. The restriction $\xi\in[0,1)$ will be invoked repeatedly throughout the remainder of this section. The change of variables~\eqref{eq:ux-var-def} and the substitution~\eqref{eq:eigfun-substitution} were devised to treat a similar Sturm--Liouville problem in the closely related work of Polunchenko~\cite{Polunchenko:SA2017}; see also~\cite{Linetsky:OR2004} and~\cite{Polunchenko+Sokolov:MCAP2016}. We also remark that equation~\eqref{eq:eigenfcn-eqn-whit-form} is symmetric with respect to the sign of $\xi$, i.e., one could also define $\xi$ as $\xi\triangleq-\sqrt{1-8\lambda}$. However, as will become clear shortly, this ambiguity in the definition of $\xi$ has no effect on the sought quasi-stationary density $q(x)$ whatsoever.

The obtained equation~\eqref{eq:eigenfcn-eqn-whit-form} is a particular case of the classical Whittaker~\cite{Whittaker:BAMS1904} equation
\begin{equation}\label{eq:Whittaker-eqn}
w_{zz}(z)+\left\{-\dfrac{1}{4}+\dfrac{a}{z}+\dfrac{1/4-b^2}{z^2}\right\}w(z)
=
0,
\end{equation}
where $w(z)$ is the unknown function of $z\in\mathbb{C}$, and $a,b\in\mathbb{C}$ are specified parameters. This self-adjoint homogeneous second-order ODE is well-known in mathematical physics as well as in mathematical finance. Its two linearly independent fundamental solutions are known as the Whittaker functions. These functions are special functions that take a variety of forms depending on the specific values of $a$ and $b$. The classical references on the general theory of the Whittaker equation~\eqref{eq:Whittaker-eqn} and Whittaker functions are~\cite{Slater:Book1960} and~\cite{Buchholz:Book1969}. For our purposes it will prove convenient and sufficient to deal with the Whittaker $M$ and $W$ functions, which are conventionally denoted, respectively, as $M_{a,b}(z)$ and $W_{a,b}(z)$, where the indices $a$ and $b$ are the Whittaker's equation~\eqref{eq:Whittaker-eqn} parameters.

By combining~\eqref{eq:Whittaker-eqn},~\eqref{eq:eigenfcn-eqn-whit-form},~\eqref{eq:eigfun-substitution} and~\eqref{eq:ux-var-def}, one can now see that the {\em general} form of $\varphi(x,\lambda)$ is
\begin{equation}\label{eq:eigfun-gen-form}
\varphi(x,\lambda)
=
x\,e^{\tfrac{1}{x}}\left\{B_1\,M_{1,\tfrac{1}{2}\xi(\lambda)}\left(\dfrac{2}{x}\right)+B_2\,W_{1,\tfrac{1}{2}\xi(\lambda)}\left(\dfrac{2}{x}\right)\right\},
\;\;
x\in[A,+\infty),
\end{equation}
where $\xi(\lambda)$ is as in~\eqref{eq:xi-def}, and $B_1$ and $B_2$ are arbitrary constants such that $B_1B_2\neq0$. It is of note that both of the two Whittaker functions involved in the obtained formula for $\varphi(x,\lambda)$ are well-defined, real-valued, and linearly independent of each other for any $\xi(\lambda)\in[0,1)$. Also, from~\cite[Identity~13.1.34,~p.~505]{Abramowitz+Stegun:Handbook1964}, i.e., from the identity
\begin{equation}\label{eq:WhitWM-identity}
W_{a,b}(z)
=
\dfrac{\Gamma(-2b)}{\Gamma(1/2-b-a)}M_{a,b}(z)+\dfrac{\Gamma(2b)}{\Gamma(1/2+b-a)}M_{a,-b}(z),
\end{equation}
where here and onward $\Gamma(z)$ denotes the Gamma function (see, e.g.,~\cite[Chapter~6]{Abramowitz+Stegun:Handbook1964}), one can readily conclude that~\eqref{eq:eigfun-gen-form} is unaffected by the sign ambiguity in the definition~\eqref{eq:xi-def} of $\xi\equiv\xi(\lambda)$.

The obvious next step is to recall that $q(x)\propto \mathfrak{m}(x)\,\varphi(x,\lambda)$, where $\mathfrak{m}(x)$ is given by~\eqref{eq:speed-measure}, and, in view of~\eqref{eq:eigfun-gen-form}, conclude that the quasi-stationary density $q(x)$ has the general form
\begin{equation}\label{eq:QSD-pdf-gen-form}
q(x)
=
\dfrac{1}{x}\,e^{-\tfrac{1}{x}}\left\{C_1\,M_{1,\tfrac{1}{2}\xi(\lambda)}\left(\dfrac{2}{x}\right)+C_2\,W_{1,\tfrac{1}{2}\xi(\lambda)}\left(\dfrac{2}{x}\right)\right\},
\;\;
x\in[A,+\infty),
\end{equation}
where $C_1$ and $C_2$ are constant factors to be designed so as to make $q(x)$ satisfy the absorbing boundary condition~\eqref{eq:QSD-pdf-bnd-cond-A} as well as the normalization constraint~\eqref{eq:QSD-pdf-norm-constraint}. With regard to the former, it is straightforward to see from~\eqref{eq:QSD-pdf-gen-form} that $C_1$ and $C_2$ must satisfy the equation
\begin{equation}\label{eq:C1C2-eqn1}
C_1\,M_{1,\tfrac{1}{2}\xi(\lambda)}\left(\dfrac{2}{A}\right)+C_2\,W_{1,\tfrac{1}{2}\xi(\lambda)}\left(\dfrac{2}{A}\right)
=
0,
\end{equation}
and it is not a degenerate equation in the sense that the Whittaker $M$ and $W$ involved in it are never zeros simultaneously, for the Whittaker $M$ and $W$ functions with the same indices and arguments (finite) may become zeros at the same time only at the origin.

To proceed, observe that
\begin{equation}\label{eq:WhitW-near-zero-asymptotics}
\begin{split}
W_{a,b}(x)
&\sim
\dfrac{\Gamma(-2b)}{\Gamma(1/2-b-a)}\,x^{\tfrac{1}{2}+b}\,e^{-\tfrac{1}{2}x}+\\
&\qquad\qquad\qquad\qquad
+\dfrac{\Gamma(2b)}{\Gamma(1/2+b-a)}\,x^{\tfrac{1}{2}-b}\,e^{-\tfrac{1}{2}x}
\;\;\text{as}\;\; x\to0+,
\end{split}
\end{equation}
which is a direct consequence of~\eqref{eq:WhitWM-identity} and the asymptotics
\begin{equation}\label{eq:WhitM-near-zero-asymptotics}
M_{a,b}(x)
\sim
x^{\tfrac{1}{2}+b}\,e^{-\tfrac{1}{2}x}
\;\;\text{as}\;\; x\to0+,
\end{equation}
established, e.g., in~\cite[Section~16.1,~p.~337]{Whittaker+Watson:Book1927}. Recalling yet again that $\xi\in[0,1)$, it follows from~\eqref{eq:QSD-pdf-gen-form} and the asymptotics~\eqref{eq:WhitW-near-zero-asymptotics} and~\eqref{eq:WhitM-near-zero-asymptotics} that $q(x)$ is $\mathfrak{m}(x)$-measurable for any $\lambda\in(0,1/8]$. Hence, let us fix $\lambda\in(0,1/8]$ and attempt to normalize $q(x)$ given by~\eqref{eq:QSD-pdf-gen-form} in accordance with the normalization constraint~\eqref{eq:QSD-pdf-norm-constraint}. To do so, we turn to~\cite[Integral~7.623.3,~p.~832]{Gradshteyn+Ryzhik:Book2014} which states that
\begin{equation}\label{eq:WhitM-int-formula}
\begin{split}
\int_{0}^{t}(t-x)^{c-1}\,x^{a-1}&\, e^{-\tfrac{1}{2}x}\,M_{a+c,b}(x)\,dx
=\\
&\qquad=
\dfrac{\Gamma(c)\Gamma(a+b+1/2)}{\Gamma(a+b+c+1/2)}\,t^{a+c-1}\,e^{-\tfrac{1}{2}t}\,M_{a,b}(t),\\
&\qquad\qquad\qquad
\;\text{provided $\Re(a+b)>-1/2$ and $\Re(c)>0$},
\end{split}
\end{equation}
and to~\cite[Integral~7.623.8,~p.~833]{Gradshteyn+Ryzhik:Book2014} which states that
\begin{equation}\label{eq:WhitW-int-formula}
\begin{split}
\int_{0}^{1}&(1-x)^{c-1}\,x^{a-c-1}\, e^{-\tfrac{1}{2}tx}\,W_{a,b}(tx)\,dx
=\\
&\qquad=
\Gamma(c)\,e^{-\tfrac{t}{2}}\sec[(a-c-b)\pi]\Biggl\{\sin(c\pi)\,\dfrac{\Gamma(a-c+b+1/2)}{\Gamma(2b+1)}\,M_{a-c,b}(t)+\\
&\qquad\qquad\qquad\qquad+\cos[(a-b)\pi]\,W_{a-c,b}(t)\Biggr\},\\
&\qquad\qquad\qquad
\;\text{provided $0<\Re(c)<\Re(a)-\abs{\Re(b)}+1/2$},
\end{split}
\end{equation}
and obtain
\begin{equation*}
\int_{A}^{+\infty}q(x)\,dx
=
e^{-\tfrac{1}{A}}\left\{C_1\,\dfrac{2}{\xi(\lambda)+1}\,M_{0,\tfrac{1}{2}\xi(\lambda)}\left(\dfrac{2}{A}\right)-C_2\,W_{0,\tfrac{1}{2}\xi(\lambda)}\left(\dfrac{2}{A}\right)\right\},
\end{equation*}
whence
\begin{equation}\label{eq:C1C2-eqn2}
C_1\,\dfrac{2}{\xi(\lambda)+1}\,M_{0,\tfrac{1}{2}\xi(\lambda)}\left(\dfrac{2}{A}\right)-C_2\,W_{0,\tfrac{1}{2}\xi(\lambda)}\left(\dfrac{2}{A}\right)
=
e^{\tfrac{1}{A}},
\end{equation}
which is another equation that the constants $C_1$ and $C_2$ involved in~\eqref{eq:QSD-pdf-gen-form} are to satisfy, and, just as~\eqref{eq:C1C2-eqn1}, this equation is also nondegenerate. Therefore, by solving equations~\eqref{eq:C1C2-eqn1} and~\eqref{eq:C1C2-eqn2} for $C_1$ and $C_2$, and then plugging them over into~\eqref{eq:QSD-pdf-gen-form}, we arrive at the formula
\begin{equation}\label{eq:QSD-pdf-gen-answer0}
\begin{split}
q(x)
&=
\dfrac{C}{x}\,e^{-\tfrac{1}{x}}\Biggl\{W_{1,\tfrac{1}{2}\xi(\lambda)}\left(\dfrac{2}{A}\right)M_{1,\tfrac{1}{2}\xi(\lambda)}\left(\dfrac{2}{x}\right)-\\
&\qquad\qquad\qquad\qquad
-M_{1,\tfrac{1}{2}\xi(\lambda)}\left(\dfrac{2}{A}\right)W_{1,\tfrac{1}{2}\xi(\lambda)}\left(\dfrac{2}{x}\right)\Biggr\}\,\indicator{x\in[A,+\infty)},
\end{split}
\end{equation}
where $A>0$ and $\xi(\lambda)$ is as in~\eqref{eq:xi-def} with $\lambda\in(0,1/8]$ arbitrary, and $C$ is the normalizing factor given by
\begin{equation}\label{eq:norm-factor-C-answer1}
\begin{split}
C
&\triangleq
\left.e^{\tfrac{1}{A}}\right/\Biggl\{W_{0,\tfrac{1}{2}\xi(\lambda)}\left(\dfrac{2}{A}\right)M_{1,\tfrac{1}{2}\xi(\lambda)}\left(\dfrac{2}{A}\right)+\\
&\qquad\qquad\qquad\qquad
+\dfrac{2}{\xi(\lambda)+1}\,M_{0,\tfrac{1}{2}\xi(\lambda)}\left(\dfrac{2}{A}\right)W_{1,\tfrac{1}{2}\xi(\lambda)}\left(\dfrac{2}{A}\right)\Biggr\}.
\end{split}
\end{equation}

It turns out that the above formula for $C$ can be substantially simplified with the aid of the identity
\begin{equation*}
W_{0,b}(z)\,M_{1,b}(z)+\dfrac{1}{b+1/2}\,M_{0,b}(z)\,W_{1,b}(z)
=
\dfrac{z}{\sqrt{\pi}}\,\dfrac{2^{2b}}{b+1/2}\,\Gamma(1+b),
\end{equation*}
which can be established through an astute use of various properties of the Whittaker functions. From this identity it is easy to see that~\eqref{eq:norm-factor-C-answer1} can be reduced down to
\begin{equation}\label{eq:norm-factor-C-answer2}
C
\triangleq
\sqrt{\pi}\;\dfrac{A}{4}\,e^{\tfrac{1}{A}}\,\dfrac{\xi(\lambda)+1}{2^{\xi(\lambda)}}\left/\Gamma\left(\dfrac{1}{2}\xi(\lambda)+1\right)\right.,
\end{equation}
whence, recalling again that $\xi(\lambda)\in[0,1)$, it can be concluded at once that $C>0$ for any $A>0$; the positivity of the normalizing factor $C$ is equivalent to saying that the integral of the general $q(x)$ given by~\eqref{eq:QSD-pdf-gen-answer0} with respect to $x$ over the interval $[A,+\infty)$ is always positive.

We are now in a position to make the following claim.
\begin{lemma}\label{lem:QSD-gen-answer}
For any fixed $A>0$ and arbitrary $\lambda\in(0,1/8]$, the function
\begin{equation}\label{eq:QSD-pdf-gen-answer}
\begin{split}
q(x)
&=
\dfrac{C}{x}\,e^{-\tfrac{1}{x}}\Biggl\{W_{1,\tfrac{1}{2}\xi(\lambda)}\left(\dfrac{2}{A}\right)M_{1,\tfrac{1}{2}\xi(\lambda)}\left(\dfrac{2}{x}\right)-\\
&\qquad\qquad\qquad\qquad
-M_{1,\tfrac{1}{2}\xi(\lambda)}\left(\dfrac{2}{A}\right)W_{1,\tfrac{1}{2}\xi(\lambda)}\left(\dfrac{2}{x}\right)\Biggr\}\,\indicator{x\in[A,+\infty)},
\end{split}
\end{equation}
with $\xi(\lambda)\in[0,1)$ as in~\eqref{eq:xi-def} and $C>0$ given by either~\eqref{eq:norm-factor-C-answer1} or~\eqref{eq:norm-factor-C-answer2}, solves the master equation~\eqref{eq:master-eqn}, and satisfies the absorbing boundary condition~\eqref{eq:QSD-pdf-bnd-cond-A} as well as the normalization constraint~\eqref{eq:QSD-pdf-norm-constraint}. Moreover, the following definite integral identity also holds:
\begin{equation}\label{eq:QSD-cdf-gen-answer}
\begin{split}
\int_{A}^{x}
q(y)\,dy
&=
1-C\,e^{-\tfrac{1}{x}}\Biggl\{W_{0,\tfrac{1}{2}\xi(\lambda)}\left(\dfrac{2}{x}\right)M_{1,\tfrac{1}{2}\xi(\lambda)}\left(\dfrac{2}{A}\right)+\\
&\qquad\qquad\qquad\qquad
+\dfrac{2}{\xi(\lambda)+1}\,M_{0,\tfrac{1}{2}\xi(\lambda)}\left(\dfrac{2}{x}\right)W_{1,\tfrac{1}{2}\xi(\lambda)}\left(\dfrac{2}{A}\right)\Biggr\},
\end{split}
\end{equation}
for any $x\in[A,+\infty)$.
\end{lemma}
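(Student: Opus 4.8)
The plan is to verify the four assertions in the order listed, relying throughout on the reduction to the Whittaker equation already performed above. That the function~\eqref{eq:QSD-pdf-gen-answer} solves the master equation~\eqref{eq:master-eqn} needs no new argument: it is exactly the general solution~\eqref{eq:QSD-pdf-gen-form} of~\eqref{eq:master-eqn}---obtained from the change of variables~\eqref{eq:ux-var-def}, the substitution~\eqref{eq:eigfun-substitution}, and the identification of~\eqref{eq:eigenfcn-eqn-whit-form} with the Whittaker equation whose fundamental pair is $M_{1,\tfrac{1}{2}\xi}$ and $W_{1,\tfrac{1}{2}\xi}$ (here and below $\xi=\xi(\lambda)\in[0,1)$)---specialized to the constants $C_1=C\,W_{1,\tfrac{1}{2}\xi}(2/A)$ and $C_2=-C\,M_{1,\tfrac{1}{2}\xi}(2/A)$; hence~\eqref{eq:QSD-pdf-gen-answer} solves~\eqref{eq:master-eqn} for every $\lambda\in(0,1/8]$. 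The absorbing boundary condition~\eqref{eq:QSD-pdf-bnd-cond-A} is just as immediate: setting $x=A$ in~\eqref{eq:QSD-pdf-gen-answer} makes the curly bracket vanish identically, $W_{1,\tfrac{1}{2}\xi}(2/A)M_{1,\tfrac{1}{2}\xi}(2/A)-M_{1,\tfrac{1}{2}\xi}(2/A)W_{1,\tfrac{1}{2}\xi}(2/A)=0$, so $q(A)=0$.

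For the normalization~\eqref{eq:QSD-pdf-norm-constraint} and the integral identity~\eqref{eq:QSD-cdf-gen-answer}, the key step is to establish the two antiderivative formulas
\begin{align*}
\int_{x}^{+\infty}\frac{1}{y}\,e^{-1/y}\,M_{1,\tfrac{1}{2}\xi}\!\left(\tfrac{2}{y}\right)dy
&=\frac{2}{\xi+1}\,e^{-1/x}\,M_{0,\tfrac{1}{2}\xi}\!\left(\tfrac{2}{x}\right),\\
\int_{x}^{+\infty}\frac{1}{y}\,e^{-1/y}\,W_{1,\tfrac{1}{2}\xi}\!\left(\tfrac{2}{y}\right)dy
&=-\,e^{-1/x}\,W_{0,\tfrac{1}{2}\xi}\!\left(\tfrac{2}{x}\right),
\end{align*}
valid for all $x\ge A$ and all $\xi\in[0,1)$. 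Each is obtained by the substitution $u=2/y$, under which $\tfrac{1}{y}\,dy=-\tfrac{1}{u}\,du$ and the range $y\in[x,+\infty)$ becomes $u\in(0,2/x]$, followed by~\eqref{eq:WhitM-int-formula} with $a=0$, $c=1$, $b=\tfrac{1}{2}\xi$, $t=2/x$ in the first case and~\eqref{eq:WhitW-int-formula} with $a=c=1$, $b=\tfrac{1}{2}\xi$, $t=2/x$ in the second; in the second case the $M$-term drops out because $\sin(c\pi)=\sin\pi=0$, while $\sec[(a-c-b)\pi]\cos[(a-b)\pi]=\sec(\tfrac{1}{2}\xi\pi)\cos\big((1-\tfrac{1}{2}\xi)\pi\big)=-1$. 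Convergence of the two improper integrals at $y=+\infty$ holds precisely because $\xi\in[0,1)$, by the near-origin asymptotics~\eqref{eq:WhitM-near-zero-asymptotics}--\eqref{eq:WhitW-near-zero-asymptotics}. Forming $C\,W_{1,\tfrac{1}{2}\xi}(2/A)$ times the first formula minus $C\,M_{1,\tfrac{1}{2}\xi}(2/A)$ times the second gives a closed form for $\int_{x}^{+\infty}q(y)\,dy$, namely $C\,e^{-1/x}$ times the curly bracket on the right of~\eqref{eq:QSD-cdf-gen-answer}. Evaluating at $x=A$ and invoking the definition~\eqref{eq:norm-factor-C-answer1} of $C$ yields $\int_{A}^{+\infty}q(y)\,dy=1$, i.e.~\eqref{eq:QSD-pdf-norm-constraint}; and for general $x\ge A$ the identity $\int_{A}^{x}q=\int_{A}^{+\infty}q-\int_{x}^{+\infty}q=1-\int_{x}^{+\infty}q$ is precisely~\eqref{eq:QSD-cdf-gen-answer}.

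What is left is to check that~\eqref{eq:norm-factor-C-answer1} and~\eqref{eq:norm-factor-C-answer2} define the same $C>0$, equivalently the Whittaker identity recorded just before the lemma, $W_{0,b}(z)M_{1,b}(z)+\tfrac{1}{b+1/2}M_{0,b}(z)W_{1,b}(z)=\tfrac{z}{\sqrt{\pi}}\,\tfrac{2^{2b}}{b+1/2}\Gamma(1+b)$. I would prove it by raising the first Whittaker index: the contiguous relations (see~\cite{Slater:Book1960}) give $M_{1,b}(z)=\tfrac{1}{b+1/2}\big[z\,M_{0,b}'(z)+\tfrac{z}{2}M_{0,b}(z)\big]$ and $W_{1,b}(z)=-\tfrac{z}{2}W_{0,b}(z)-z\,W_{0,b}'(z)$, and substituting these into the left-hand side cancels the $M_{0,b}W_{0,b}$ contributions and leaves $\tfrac{z}{b+1/2}\big[W_{0,b}(z)M_{0,b}'(z)-W_{0,b}'(z)M_{0,b}(z)\big]$, i.e.~$\tfrac{z}{b+1/2}$ times the Wronskian of $W_{0,b}$ and $M_{0,b}$, whose value is the constant $\Gamma(1+2b)/\Gamma(b+1/2)$; the Legendre duplication formula $\Gamma(1+2b)=\tfrac{2^{2b}}{\sqrt{\pi}}\Gamma(b+1/2)\Gamma(1+b)$ then converts this into the claimed right-hand side. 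I expect this last identity---together with the collapse of the trigonometric factors in~\eqref{eq:WhitW-int-formula} noted above---to be the only genuinely delicate point; the remainder is bookkeeping built on the substitution $u=2/x$ and the tabulated integrals~\eqref{eq:WhitM-int-formula}--\eqref{eq:WhitW-int-formula}.
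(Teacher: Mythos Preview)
Your proposal is correct and follows essentially the same route as the paper: the paper's proof simply notes that the first three assertions were already established in the discussion preceding the lemma and that the remaining identity~\eqref{eq:QSD-cdf-gen-answer} follows by integrating~\eqref{eq:QSD-pdf-gen-answer0} via the tabulated formulas~\eqref{eq:WhitM-int-formula} and~\eqref{eq:WhitW-int-formula}, which is exactly what you carry out in detail (computing $\int_x^{+\infty}q$ and subtracting from~$1$). Your additional verification of the identity linking~\eqref{eq:norm-factor-C-answer1} and~\eqref{eq:norm-factor-C-answer2} via the contiguous relations and the Wronskian of $M_{0,b}$ and $W_{0,b}$ is a welcome supplement, since the paper states that identity just before the lemma but does not prove it.
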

\begin{proof}
We only need to show~\eqref{eq:QSD-cdf-gen-answer}. To that end, the result can be obtained by integrating~\eqref{eq:QSD-pdf-gen-answer0} with respect to $x$ and evaluating the integral with the aid of the definite integral identities~\eqref{eq:WhitM-int-formula} and~\eqref{eq:WhitW-int-formula}.
\end{proof}

The only question that has not yet been addressed is that of actually finding $\lambda$. To that end, recall that $\varphi(x,\lambda)$ given by~\eqref{eq:eigfun-gen-form} must be square-integrable with respect to the $\mathfrak{m}(x)$ measure given by~\eqref{eq:speed-measure}. More concretely, this means that $\varphi(x,\lambda)$ must be such that $\|\varphi(\cdot,\lambda)\|<+\infty$ where the norm $\|\varphi(\cdot,\lambda)\|$ is defined in~\eqref{eq:eigfun-norm-def}. Due to the Whittaker $W$ and $M$ functions' near-origin behavior given by~\eqref{eq:WhitW-near-zero-asymptotics} and~\eqref{eq:WhitM-near-zero-asymptotics}, respectively, the function $\varphi(x,\lambda)$ is not square-integrable near $x=+\infty$, unless either $\lambda=1/8$ or $B_2=0$ in the right-hand side of~\eqref{eq:eigfun-gen-form}. As a result, we are to distinguish two separate cases: (a) $\lambda\in(0,1/8)$ and $B_2=0$ in~\eqref{eq:eigfun-gen-form}, and (b) $\lambda=1/8$.

Let us first suppose that $\lambda\in(0,1/8)$. Since in this case we must set $B_2=0$ in~\eqref{eq:eigfun-gen-form} to achieve $\|\varphi(\cdot,\lambda)\|<+\infty$, it follows that we must also set $C_2=0$ in~\eqref{eq:QSD-pdf-gen-form}. If $C_2=0$ then from~\eqref{eq:C1C2-eqn1} it is clear that the only way for $C_1$ to have a nontrivial value is to demand that $\lambda$ and $A>0$ be connected via the equation
\begin{equation}\label{eq:lambdaA-eqn}
M_{1,\tfrac{1}{2}\xi(\lambda)}\left(\dfrac{2}{A}\right)
=
0,
\end{equation}
where $\xi(\lambda)$ is as in~\eqref{eq:xi-def}. Although this equation does not permit a closed form solution $\lambda\equiv\lambda_A$ as a function of $A>0$, it can be gleaned from~\cite[p.~182]{Buchholz:Book1969} that this equation does have at most one solution $\lambda\equiv\lambda_A\in(0,1/8]$ for any $A>0$. More concretely, this solution is an increasing function of $A>0$, and it ceases to exist as soon as $A>0$ reaches the value $A^{*}$ that is the solution of the equation
\begin{equation}\label{eq:Astar-eqn}
M_{1,0}\left(\dfrac{2}{A^{*}}\right)
=
0,
\end{equation}
and although this equation is also transcendental, it is easily solvable numerically with any desired accuracy, yielding $A^{*}\approx1.265857361$.

We wrote a {\em Mathematica} script to solve equation~\eqref{eq:lambdaA-eqn} for $\lambda_A$ numerically. Figure~\ref{fig:lambda_vs_A} was obtained with the help of our {\em Mathematica} script, and it shows the behavior of $\lambda_A$ as a function of $A\in(0,A^{*}]$. It is clear from the figure that $\lambda_A\;(>0)$ is an increasing function of $A\;(>0)$ rapidly growing up to the value of $1/8=0.125$, which is the cutoff point of the spectrum of the operator $\mathscr{D}$ given by~\eqref{eq:Doperator-def}. Moreover, the figure also shows that the value of $1/8$ is attained by $\lambda_A$ at $A=A^{*}\approx1.265857361$ where $A^{*}$ is the solution of equation~\eqref{eq:Astar-eqn}. All this not only fulfills but also improves some of the predictions previously made in~\cite[Section~7.8.2]{Collet+etal:Book2013}, where, in particular, it was shown that $\lambda_A=1/8$ is guaranteed for $A\ge e^3\approx 20.0855369$.
\begin{figure}[h!]
    \centering
    \includegraphics[width=\linewidth]{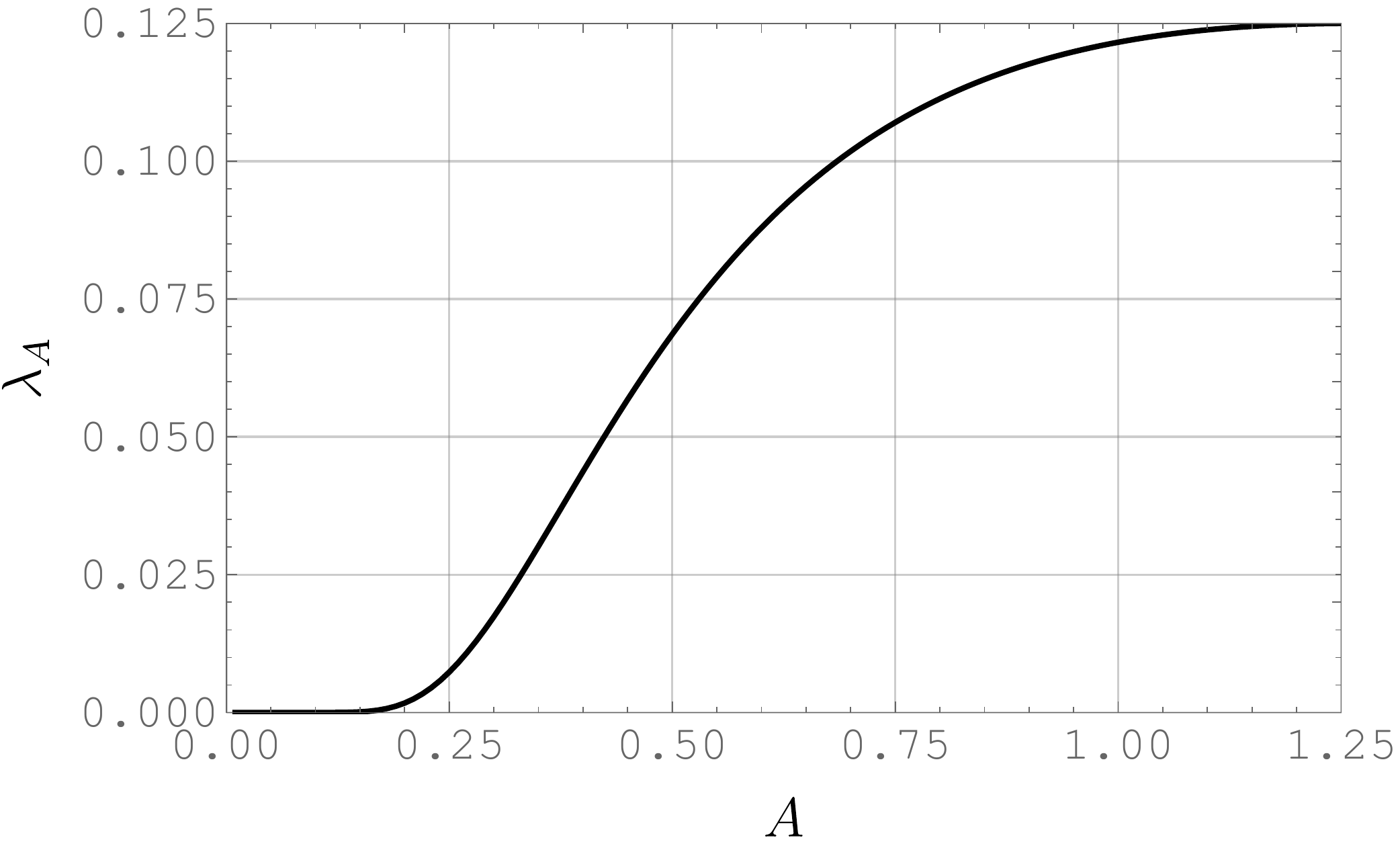}
    \caption{Smallest eigenvalue $\lambda_A\in(0,1/8]$ of Sturm--Liouville operator $\mathscr{D}$ as a function of $A\in(0,A^{*}]$.}
    \label{fig:lambda_vs_A}
\end{figure}

We can now conclude that, if $A\in(0,A^{*})$, then the quasi-stationary distribution's pdf and cdf are given by~\eqref{eq:QSD-pdf-gen-answer} and~\eqref{eq:QSD-cdf-gen-answer}, respectively, with $\lambda_A\in(0,1/8)$ determined as the only solution of equation~\eqref{eq:lambdaA-eqn}.

Let us now switch attention to the case when $\lambda=1/8$. From the above discussion it follows that this case takes effect for $A\ge A^{*}\approx1.265857361$. The quasi-stationary distribution formulae~\eqref{eq:QSD-pdf-gen-answer} and~\eqref{eq:QSD-cdf-gen-answer} remain valid ``as is'', except that $\xi(\lambda)$ becomes $\xi(1/8)=0$.

At this point we have effectively proved the following two theorems.
\begin{theorem}\label{thm:QSD-answer1}
If $A^{*}\approx1.265857361$ is the solution of the equation
\begin{equation*}
M_{1,0}\left(\dfrac{2}{A^{*}}\right)
=
0,
\end{equation*}
then for every fixed $A\in(0,A^{*})$ the equation
\begin{equation*}
M_{1,\tfrac{1}{2}\xi(\lambda)}\left(\dfrac{2}{A}\right)
=
0
\;\;
\text{with}
\;\;
\xi(\lambda)\triangleq\sqrt{1-8\lambda}
\end{equation*}
has exactly one solution $\lambda\equiv\lambda_A\in(0,1/8)$, and the quasi-stationary density $q_A(x)$ is given by
\begin{equation}\label{eq:QSD-pdf-answer1}
q_A(x)
=
\dfrac{\dfrac{\xi(\lambda)+1}{x}\,e^{-\tfrac{1}{x}}\,M_{1,\tfrac{1}{2}\xi(\lambda)}\left(\dfrac{2}{x}\right)}{2\,e^{-\tfrac{1}{A}}\,M_{0,\tfrac{1}{2}\xi(\lambda)}\left(\dfrac{2}{ A}\right)}\,\indicator{x\in[A,+\infty)},
\end{equation}
while the respective quasi-stationary cdf $Q_A(x)$ is given by
\begin{equation}\label{eq:QSD-cdf-answer1}
Q_A(x)
=
\begin{cases}
&1-\dfrac{e^{-\tfrac{1}{x}}\,M_{0,\tfrac{1}{2}\xi(\lambda)}\left(\dfrac{2}{x}\right)}{e^{-\tfrac{1}{A}}\,M_{0,\tfrac{1}{2}\xi(\lambda)}\left(\dfrac{2}{A}\right)},\;\text{for $x\in(A,+\infty)$;}\\[2mm]
&0,\;\text{otherwise}.\\[2mm]
\end{cases}
\end{equation}
\end{theorem}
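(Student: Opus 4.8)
The plan is to read the statement off the structural facts assembled in Section~\ref{sec:preliminaries} together with Lemma~\ref{lem:QSD-gen-answer}; after that only two genuinely new points remain, namely (i)~pinning down the eigenvalue $\lambda_A$, and (ii)~collapsing the two--Whittaker--function formulae \eqref{eq:QSD-pdf-gen-answer}--\eqref{eq:QSD-cdf-gen-answer} into the single--term expressions \eqref{eq:QSD-pdf-answer1}--\eqref{eq:QSD-cdf-answer1}.

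For point~(i) I would first invoke the square--integrability restriction. For $\lambda\in(0,1/8)$ one has $\xi(\lambda)\in(0,1)$, and then the near--origin asymptotics \eqref{eq:WhitW-near-zero-asymptotics}--\eqref{eq:WhitM-near-zero-asymptotics}, transported through the change of variables $u=2/x$ (so that $x\to+\infty$ corresponds to $u\to0+$), show that the Whittaker--$W$ contribution to $\varphi(x,\lambda)$ in \eqref{eq:eigfun-gen-form} forces $\|\varphi(\cdot,\lambda)\|=+\infty$; hence $B_2=0$ in \eqref{eq:eigfun-gen-form} and therefore $C_2=0$ in \eqref{eq:QSD-pdf-gen-form}. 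With $C_2=0$ the Dirichlet condition \eqref{eq:C1C2-eqn1} degenerates to $C_1\,M_{1,\tfrac{1}{2}\xi(\lambda)}(2/A)=0$, and since $C_1=0$ would leave $q\equiv0$, which cannot be normalized, we are driven onto equation \eqref{eq:lambdaA-eqn}. By the preliminaries the part of the spectrum of $\mathscr{D}$ lying in $[0,1/8]$ consists of a finite set of simple eigenvalues determined exactly by this Dirichlet requirement; since $\lambda=0$ is excluded and, for $A<A^{*}$, $\lambda=1/8$ is excluded too (it would force $M_{1,0}(2/A)=0$, contradicting the definition of $A^{*}$ via \eqref{eq:Astar-eqn}), any root $\lambda_A\in(0,1/8)$ of \eqref{eq:lambdaA-eqn} is automatically the smallest eigenvalue and thus the one attached to the quasi--stationary distribution. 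That \eqref{eq:lambdaA-eqn} has exactly one such root for every $A\in(0,A^{*})$---and that this root increases in $A$, hitting the cutoff $\xi=0$, i.e.\ $\lambda=1/8$, precisely at $A=A^{*}$---I would deduce from the behaviour of the (unique) positive zero of $M_{1,b}(\cdot)$ as a function of the index, using the properties of the zeros of Whittaker $M$ functions recorded in~\cite[p.~182]{Buchholz:Book1969}. I expect this last item---the joint control of the zero of $M_{1,\tfrac{1}{2}\xi}(2/A)$ in $\xi\in[0,1)$ and in $A>0$---to be the main obstacle, as everything else is either a direct consequence of the Sturm--Liouville setup or a routine computation.

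For point~(ii), once $C_2=0$ is in force the term in \eqref{eq:QSD-pdf-gen-answer} carrying the factor $M_{1,\tfrac{1}{2}\xi(\lambda)}(2/A)$ vanishes by \eqref{eq:lambdaA-eqn}, leaving $q(x)$ proportional to $x^{-1}e^{-1/x}M_{1,\tfrac{1}{2}\xi(\lambda)}(2/x)$; the proportionality constant is fixed directly by computing $\int_A^{+\infty}x^{-1}e^{-1/x}M_{1,\tfrac{1}{2}\xi(\lambda)}(2/x)\,dx$, which after $u=2/x$ is evaluated by \eqref{eq:WhitM-int-formula} to equal $e^{-1/A}\,\tfrac{2}{\xi(\lambda)+1}\,M_{0,\tfrac{1}{2}\xi(\lambda)}(2/A)$ (consistently with \eqref{eq:C1C2-eqn2} at $C_2=0$), and dividing through yields \eqref{eq:QSD-pdf-answer1}. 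For the cdf I would start from \eqref{eq:QSD-cdf-gen-answer}, where the summand carrying $M_{1,\tfrac{1}{2}\xi(\lambda)}(2/A)$ again drops out, and substitute the value of $C\,W_{1,\tfrac{1}{2}\xi(\lambda)}(2/A)$ read off by matching \eqref{eq:QSD-pdf-gen-answer0} (with $M_{1,\tfrac{1}{2}\xi(\lambda)}(2/A)=0$) against the already--normalized \eqref{eq:QSD-pdf-answer1}; the surviving $M_{0,\tfrac{1}{2}\xi(\lambda)}$--term then telescopes, giving \eqref{eq:QSD-cdf-answer1}, while $Q_A(x)=0$ for $x\le A$ is immediate from the definition \eqref{eq:QSD-def} and the support of $q_A$. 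Finally, that $A^{*}$ is well defined---that $M_{1,0}(2/A^{*})=0$ has a solution and it is exactly where the regime $\lambda_A\in(0,1/8)$ terminates---is the $\xi\to0$ endpoint of the monotonicity analysis above, and its numerical value $A^{*}\approx1.265857361$ is obtained by solving \eqref{eq:Astar-eqn} numerically.
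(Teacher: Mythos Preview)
Your proposal is correct and follows essentially the same route as the paper: square-integrability forces $B_2=0$ (hence $C_2=0$) for $\lambda\in(0,1/8)$, the Dirichlet condition then reduces to \eqref{eq:lambdaA-eqn}, existence, uniqueness and monotonicity of its root are read off \cite[p.~182]{Buchholz:Book1969}, and the one-term formulae \eqref{eq:QSD-pdf-answer1}--\eqref{eq:QSD-cdf-answer1} fall out of Lemma~\ref{lem:QSD-gen-answer} once $M_{1,\tfrac{1}{2}\xi(\lambda)}(2/A)=0$ kills the second summand in \eqref{eq:QSD-pdf-gen-answer}--\eqref{eq:QSD-cdf-gen-answer} and simplifies the constant \eqref{eq:norm-factor-C-answer1}.

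One minor slip worth flagging: your exclusion of $\lambda=1/8$ on the grounds that ``it would force $M_{1,0}(2/A)=0$'' implicitly presumes $C_2=0$ at $\xi=0$, but the square-integrability argument that forces $C_2=0$ does not carry over to $\xi=0$ (at $\xi=0$ neither $M_{1,0}$ nor $W_{1,0}$ produces an $L^2$ eigenfunction; $1/8$ is the edge of the continuous spectrum, not a discrete eigenvalue). This does no damage to the proof, however: once you exhibit a root $\lambda_A\in(0,1/8)$ of \eqref{eq:lambdaA-eqn}, it is automatically below $1/8$ and hence the smallest eigenvalue irrespective of what happens at $1/8$, which is all that is needed. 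The paper itself does not argue this point and simply identifies $\lambda_A$ as the bottom of the spectrum directly.
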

\begin{theorem}\label{thm:QSD-answer2}
If $A^{*}\approx1.265857361$ is the solution of the equation
\begin{equation*}
M_{1,0}\left(\dfrac{2}{A^{*}}\right)
=
0,
\end{equation*}
then for every fixed $A\ge A^{*}$ the equation
\begin{equation*}
M_{1,\tfrac{1}{2}\xi(\lambda)}\left(\dfrac{2}{A}\right)
=
0
\;\;
\text{with}
\;\;
\xi(\lambda)\triangleq\sqrt{1-8\lambda}
\end{equation*}
has no solution $\lambda\equiv\lambda_A\in(0,1/8]$ except $\lambda\equiv\lambda_A=1/8$ attained at $A=A^{*}$, and the quasi-stationary density $q_A(x)$ is given by
\begin{equation}\label{eq:QSD-pdf-answer2}
\begin{split}
q_A(x)
&=
\sqrt{\pi}\,e^{\tfrac{1}{A}}\,\dfrac{A}{4x}\,e^{-\tfrac{1}{x}}\Biggl\{W_{1,0}\left(\dfrac{2}{A}\right)M_{1,0}\left(\dfrac{2}{x}\right)-\\
&\qquad\qquad\qquad\qquad\qquad\qquad
-M_{1,0}\left(\dfrac{2}{A}\right)W_{1,0}\left(\dfrac{2}{x}\right)\Biggr\}\,\indicator{x\in[A,+\infty)},
\end{split}
\end{equation}
while the respective quasi-stationary cdf $Q_A(x)$ is given by
\begin{equation}\label{eq:QSD-cdf-answer2}
Q_A(x)
=
\begin{cases}
&1-\sqrt{\pi}\,e^{\tfrac{1}{A}}\,\dfrac{A}{4}\,e^{-\tfrac{1}{x}}\Biggl\{W_{0,0}\left(\dfrac{2}{x}\right)M_{1,0}\left(\dfrac{2}{A}\right)+\\[2mm]
&\qquad\qquad\qquad\qquad
+2\,M_{0,0}\left(\dfrac{2}{x}\right)W_{1,0}\left(\dfrac{2}{A}\right)\Biggr\},\;\text{for $x\in(A,+\infty)$;}\\[2mm]
&0,\;\text{otherwise}.\\[2mm]
\end{cases}
\end{equation}
\end{theorem}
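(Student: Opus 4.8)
The plan is to derive Theorem~\ref{thm:QSD-answer2} as the threshold case $\lambda=1/8$ (equivalently $\xi(\lambda)=0$) of the apparatus already assembled, so that very little remains to be done. First I would dispose of the statement about the transcendental equation. That $M_{1,\tfrac{1}{2}\xi(\lambda)}(2/A)=0$ has at most one root $\lambda_A\in(0,1/8]$ for each fixed $A>0$, and that this root is increasing in $A$ wherever it exists, is the zero-counting and monotonicity property of the Whittaker $M$ function recorded in~\cite[p.~182]{Buchholz:Book1969}; the monotonicity also follows from the killing-rate representation~\eqref{eq:lambdaA-kill-rate} exactly as in~\cite[Section~7.8.2]{Collet+etal:Book2013}. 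Since $\xi(1/8)=\sqrt{1-8\cdot\tfrac{1}{8}}=0$, the value $\lambda=1/8$ solves $M_{1,\tfrac{1}{2}\xi(\lambda)}(2/A)=0$ precisely when $M_{1,0}(2/A)=0$, i.e.\ precisely when $A=A^{*}$ by the definition~\eqref{eq:Astar-eqn} of $A^{*}$ (here one uses that $M_{1,0}(\cdot)$ has a single positive zero); hence $\lambda_{A^{*}}=1/8$, and monotonicity then forbids a root in $(0,1/8)$ once $A>A^{*}$, since such a root would have to exceed $\lambda_{A^{*}}=1/8$. This establishes the first assertion.

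The same reasoning shows that for every $A\ge A^{*}$ the operator $\mathscr{D}$ has no eigenvalue strictly below $1/8$; since the quasi-stationary distribution exists~\cite[Section~7.8.2]{Collet+etal:Book2013}, its spectrum is confined to $(0,1/8]$, and the spectrum on $(1/8,+\infty)$ is purely absolutely continuous, the operative value is necessarily $\lambda_A=1/8$, so $\xi(\lambda_A)=0$. As was noted in the case analysis preceding the theorem, at $\lambda=1/8$ the square-integrability requirement at $x=+\infty$ no longer forces $B_2=0$ in~\eqref{eq:eigfun-gen-form}, so the full two-term expression of Lemma~\ref{lem:QSD-gen-answer} applies, the ratio $C_1:C_2$ being pinned by the Dirichlet condition~\eqref{eq:C1C2-eqn1}; in particular the boundary condition $q_A(A)=0$ and the normalization~\eqref{eq:QSD-pdf-norm-constraint} are inherited from the lemma. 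Specializing the simplified normalizer~\eqref{eq:norm-factor-C-answer2} to $\xi(\lambda)=0$ gives $C=\sqrt{\pi}\,(A/4)\,e^{1/A}$ (using $\Gamma(1)=1$, $2^{0}=1$, and $\xi(\lambda)+1=1$), and inserting this together with $\xi(\lambda)=0$ into~\eqref{eq:QSD-pdf-gen-answer} yields~\eqref{eq:QSD-pdf-answer2}; inserting the same into the integral identity~\eqref{eq:QSD-cdf-gen-answer}, where $2/(\xi(\lambda)+1)$ collapses to $2$, yields $Q_A(x)$ for $x\in(A,+\infty)$ exactly as in~\eqref{eq:QSD-cdf-answer2}, while $Q_A(x)=0$ for $x\le A$ is immediate from the support $[A,+\infty)$ of $q_A$. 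As a consistency check I would verify $Q_A(A)=0$ from the $b=0$ instance $W_{0,0}(z)M_{1,0}(z)+2\,M_{0,0}(z)W_{1,0}(z)=2z/\sqrt{\pi}$ of the Whittaker identity underlying~\eqref{eq:norm-factor-C-answer2}.

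The only step I expect to require genuine care rather than bookkeeping is the claim that $\lambda_A=1/8$ is the operative value for the entire range $A\ge A^{*}$, not merely at $A=A^{*}$: this depends on having first excluded every eigenvalue in $(0,1/8)$ and on the admissibility of the threshold solution (the square-integrability behavior at $+\infty$ together with solvability of~\eqref{eq:C1C2-eqn1}), which together guarantee that a legitimate quasi-stationary density is furnished at $\lambda=1/8$. Everything after that point---evaluating $C$ at $\xi=0$, the substitutions into~\eqref{eq:QSD-pdf-gen-answer} and~\eqref{eq:QSD-cdf-gen-answer}, and the trivial branch $x\le A$---is a routine specialization of Lemma~\ref{lem:QSD-gen-answer}.
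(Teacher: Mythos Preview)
Your proposal is correct and follows essentially the same route as the paper: the paper simply states that Theorem~\ref{thm:QSD-answer2} ``easily follow[s] directly from Lemma~\ref{lem:QSD-gen-answer} and the discussion preceding it,'' where that discussion is precisely the zero-counting/monotonicity argument from~\cite[p.~182]{Buchholz:Book1969} for equation~\eqref{eq:lambdaA-eqn}, the identification of $A^{*}$ via~\eqref{eq:Astar-eqn}, and the observation that for $A\ge A^{*}$ one sets $\xi(\lambda)=0$ in the general formulae. Your write-up is in fact more explicit than the paper's (you spell out the evaluation of $C$ at $\xi=0$ and the consistency check $Q_A(A)=0$), but the underlying argument is identical.
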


Theorems~\ref{thm:QSD-answer1} and~\ref{thm:QSD-answer2} both easily follow directly from Lemma~\ref{lem:QSD-gen-answer} and the discussion preceding it. It is also worth pointing out that, in view of~\eqref{eq:norm-factor-C-answer1} and~\eqref{eq:norm-factor-C-answer2}, formulae~\eqref{eq:QSD-pdf-answer1} and~\eqref{eq:QSD-cdf-answer1} each permit a different expression, similar to that of formulae~\eqref{eq:QSD-pdf-answer2} and~\eqref{eq:QSD-cdf-answer2}, respectively. The possibility of formulae~\eqref{eq:QSD-pdf-answer1} and~\eqref{eq:QSD-cdf-answer1} taking the form akin to that of formulae~\eqref{eq:QSD-pdf-answer2} and~\eqref{eq:QSD-cdf-answer2}, respectively, is an indication that the quasi-stationary distribution is a smooth, continuous function of $A>0$.

As complicated as the obtained formulae~\eqref{eq:QSD-pdf-answer1}--\eqref{eq:QSD-cdf-answer1} and~\eqref{eq:QSD-pdf-answer2}--\eqref{eq:QSD-cdf-answer2} may seem, they are all perfectly amenable to numerical evaluation, meaning that the quasi-stationary distribution's pdf and cdf can all be evaluated numerically to within any desired accuracy, for any $A>0$. To illustrate this point, we implemented the formulae in a {\em Mathematica} script, and used it to perform a few numerical experiments each corresponding to a specific value of $A>0$. The obtained results are presented next.

Figures~\ref{fig:qA+QA_A_1_over_10},~\ref{fig:qA+QA_A_1_over_2}, and~\ref{fig:qA+QA_A_1} depict the quasi-stationary pdf $q_A(x)$ and the corresponding cdf $Q_A(x)$ as functions of $1/x\in[0,1/A]$ for $A=1/10$, $A=1/2$, and for $A=1$, respectively. We stress that the $x$-axis scale is not $x\in[A,+\infty)$ but is $1/x\in[0,1/A]$. This is intentional, and is done to achieve finiteness of the domain of the quasi-stationary distribution. On the flip side, however, this transformation effectively reverses the direction of the $x$-axis, which is why the cdf $Q_A(x)$ appears as a decreasing function of $x$: it is not, as long as one keeps in mind that the $x$-axis is the reciprocal of $x$. It is also of note that $A=\{1/10,1/2,1\}$ are all smaller than $A^{*}\approx1.265857361$, so that $\lambda_A\in(0,1/8)$ and the corresponding pdf $q(x)$ and cdf $Q(x)$ are given by Theorem~\ref{thm:QSD-answer1}.
\begin{figure}[tph]
    \centering
    \subfloat[$q_A(x)$.]{
        \includegraphics[width=0.47\textwidth]{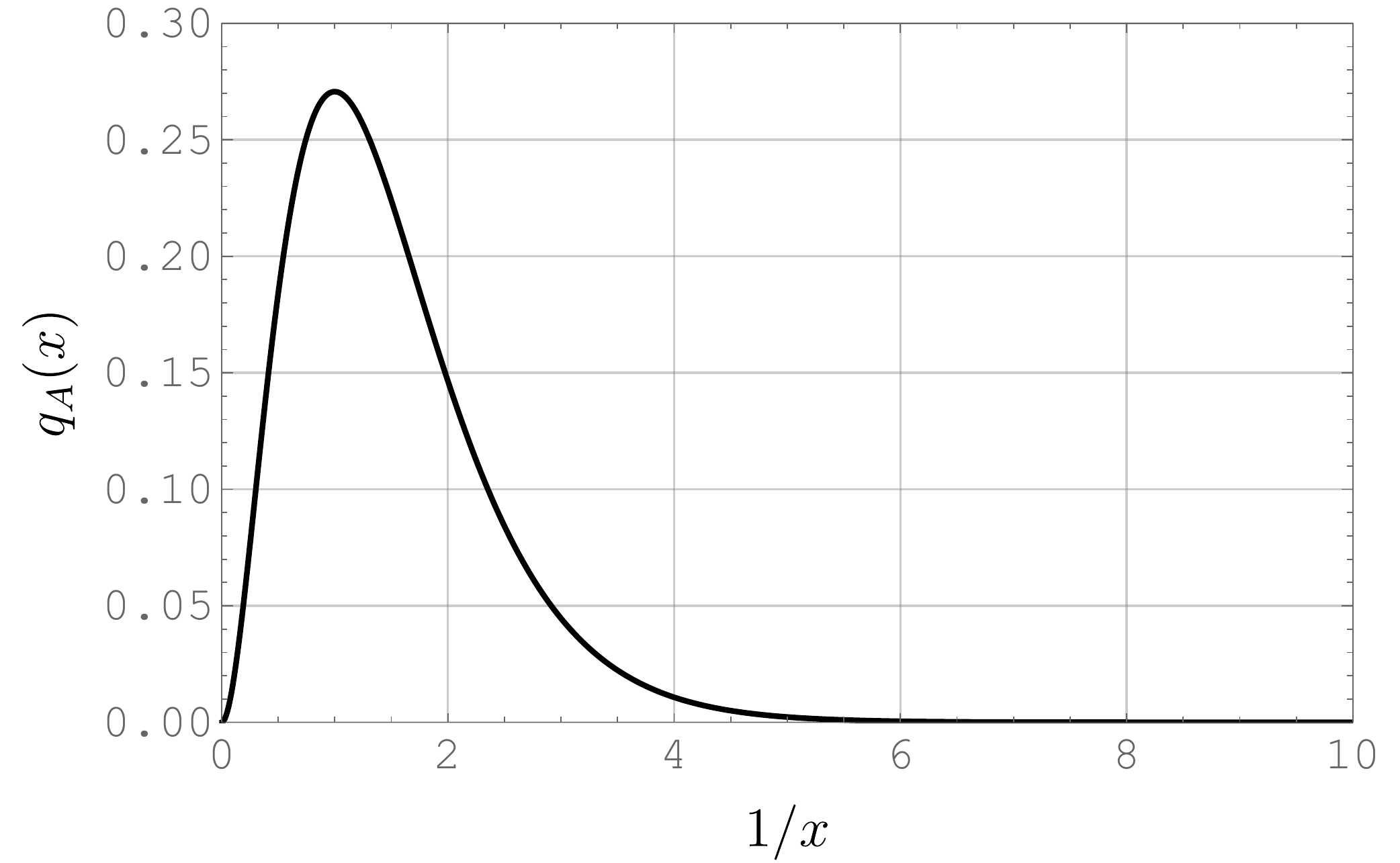}
    }
    \;
    \subfloat[$Q_A(x)$.]{
        \includegraphics[width=0.47\textwidth]{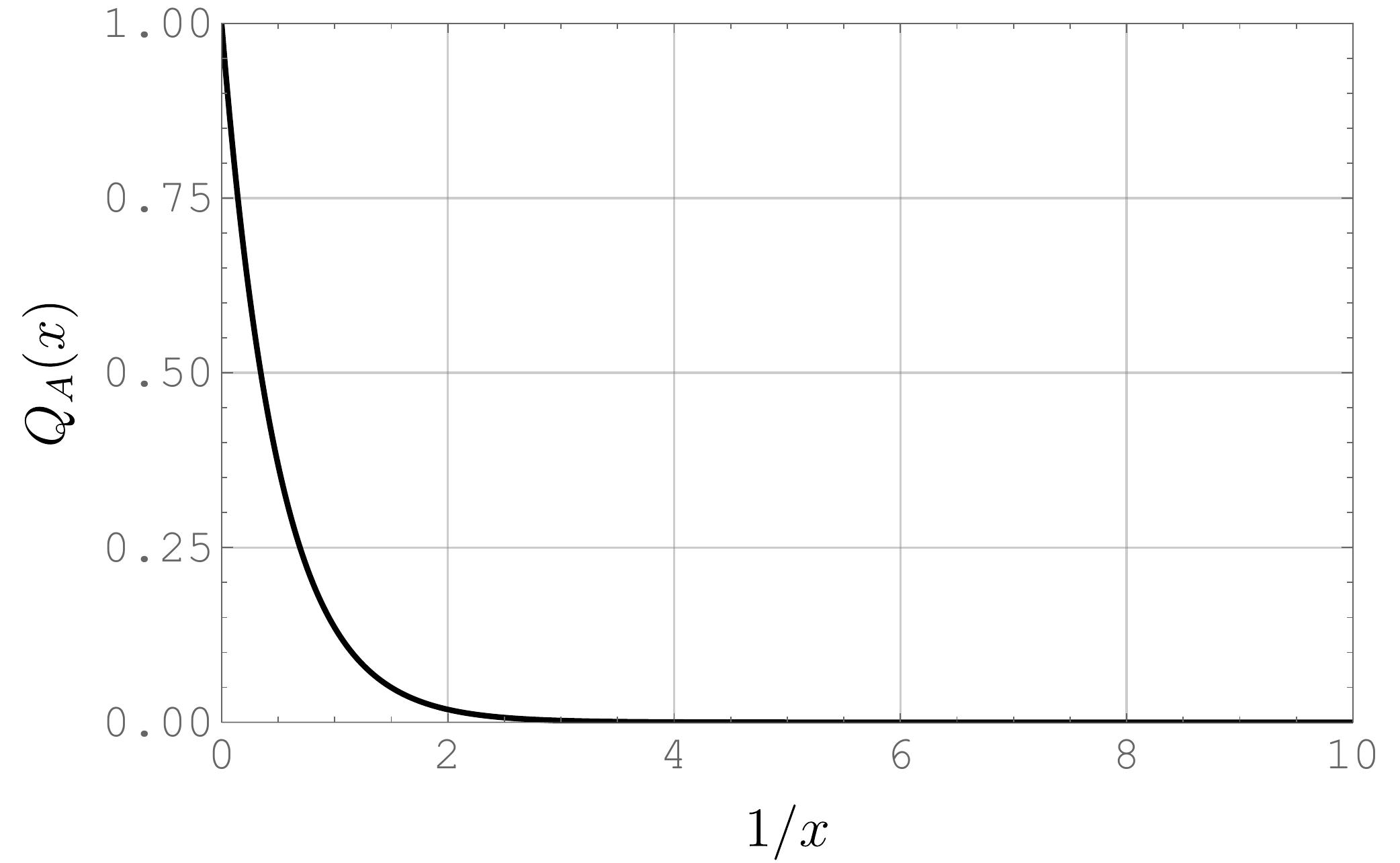}
    }
    \caption{Quasi-stationary distribution's pdf $q_A(x)$ and cdf $Q_A(x)$ as functions of $1/x$ for $A=1/10$.}
    \label{fig:qA+QA_A_1_over_10}
\end{figure}
\begin{figure}[tph]
    \centering
    \subfloat[$q_A(x)$.]{
        \includegraphics[width=0.47\textwidth]{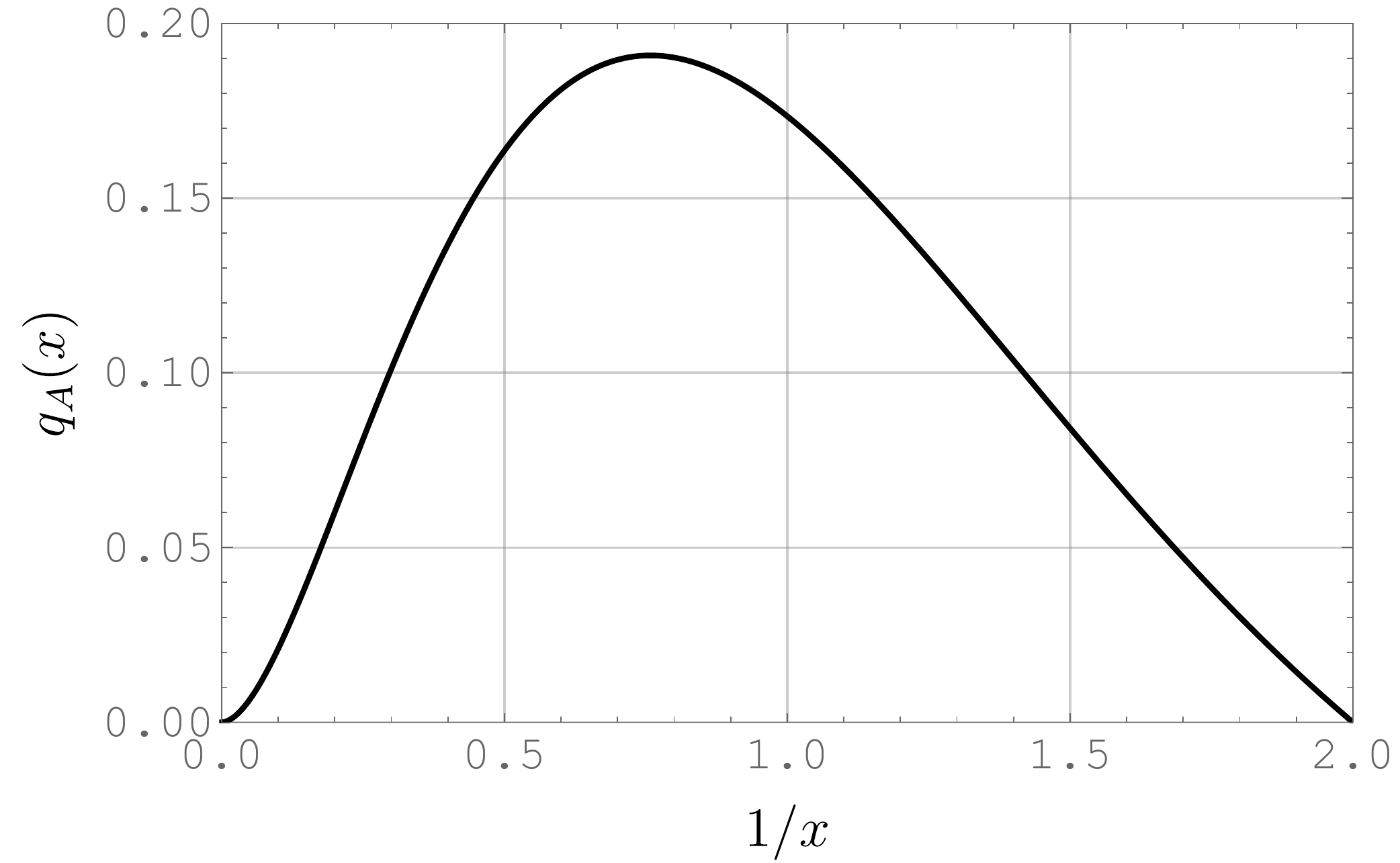}
    }
    \;
    \subfloat[$Q_A(x)$.]{
        \includegraphics[width=0.47\textwidth]{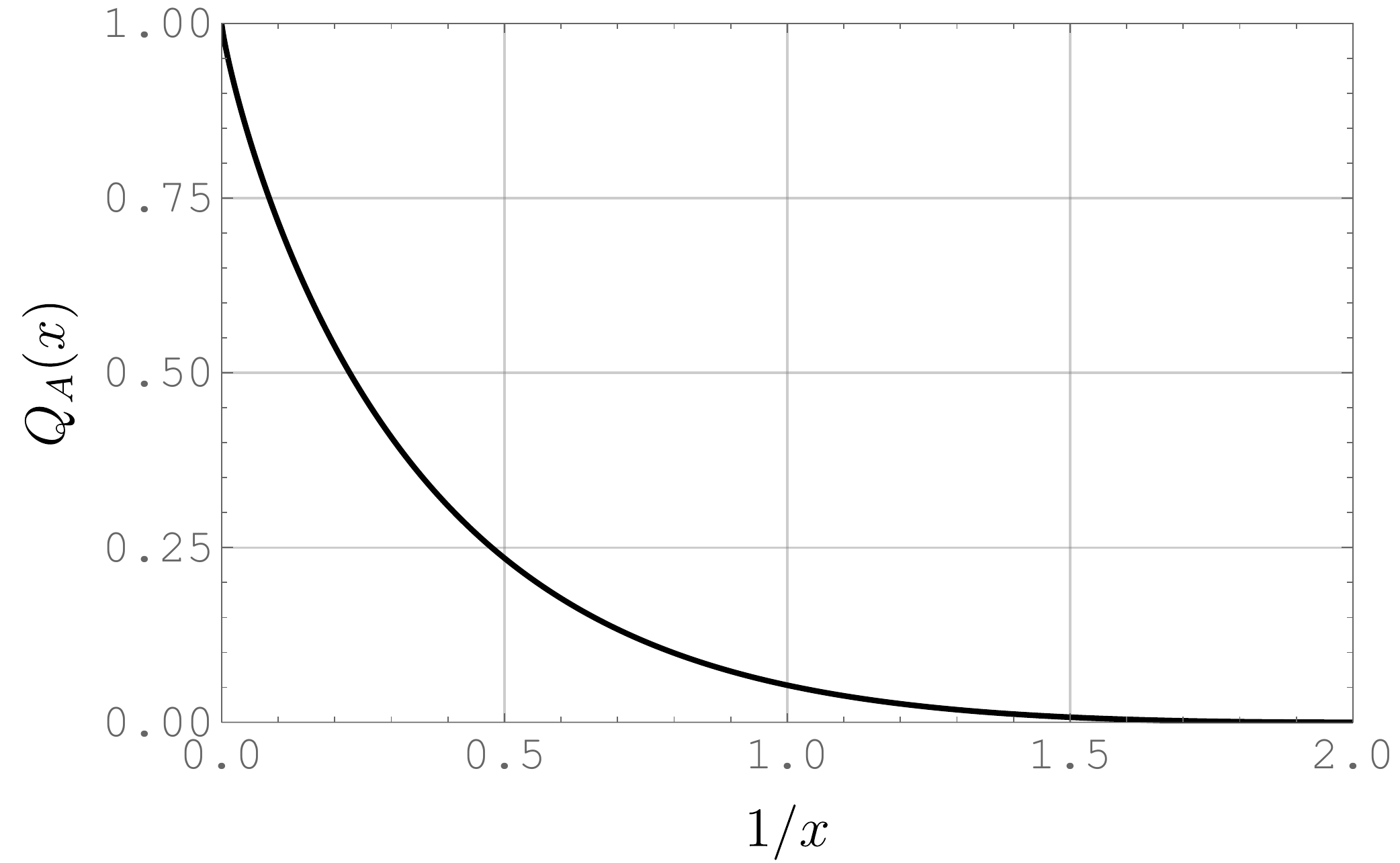}
    }
    \caption{Quasi-stationary distribution's pdf $q_A(x)$ and cdf $Q_A(x)$ as functions of $1/x$ for $A=1/2$.}
    \label{fig:qA+QA_A_1_over_2}
\end{figure}
\begin{figure}[tph]
    \centering
    \subfloat[$q_A(x)$.]{
        \includegraphics[width=0.47\textwidth]{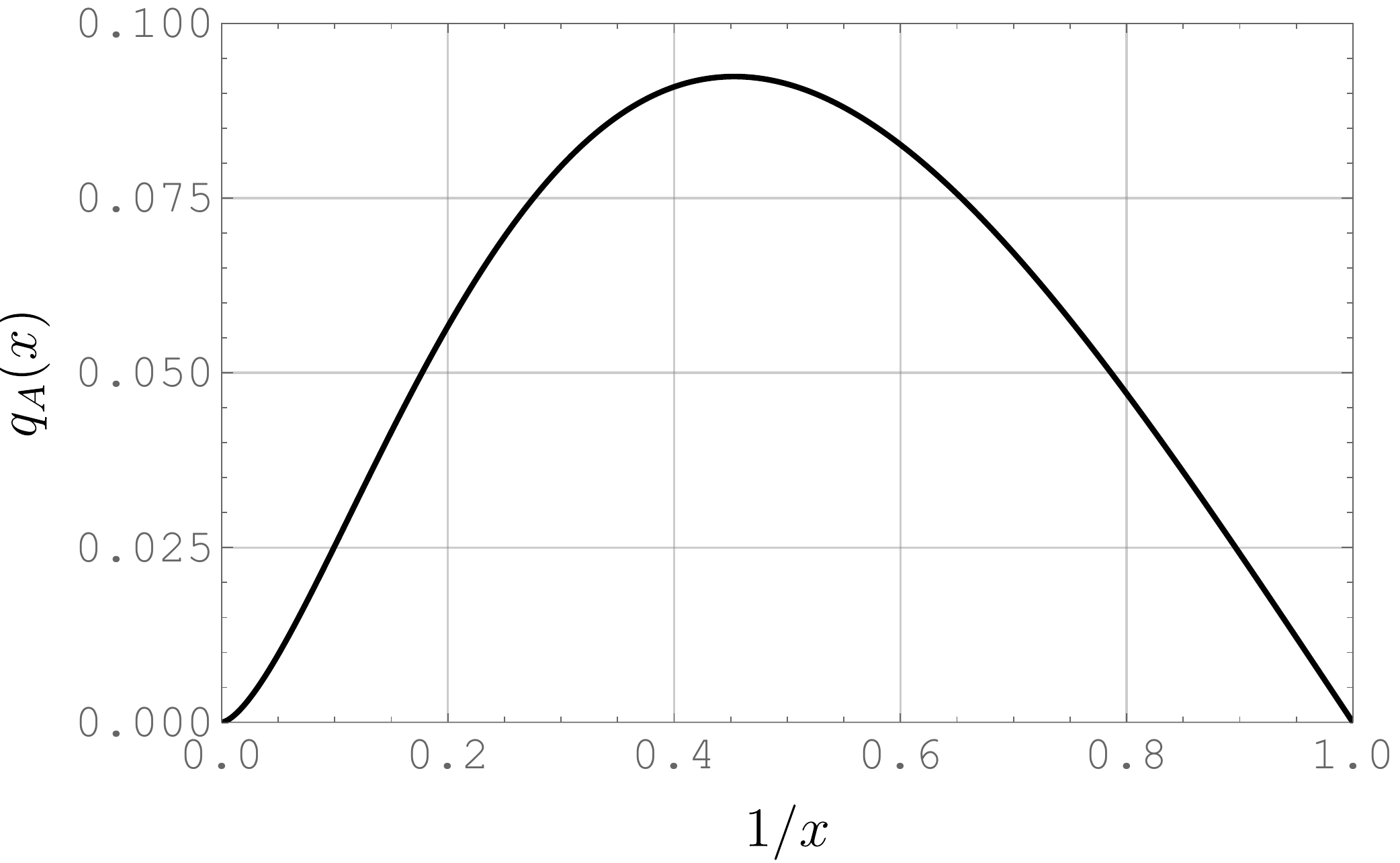}
    }
    \;
    \subfloat[$Q_A(x)$.]{
        \includegraphics[width=0.47\textwidth]{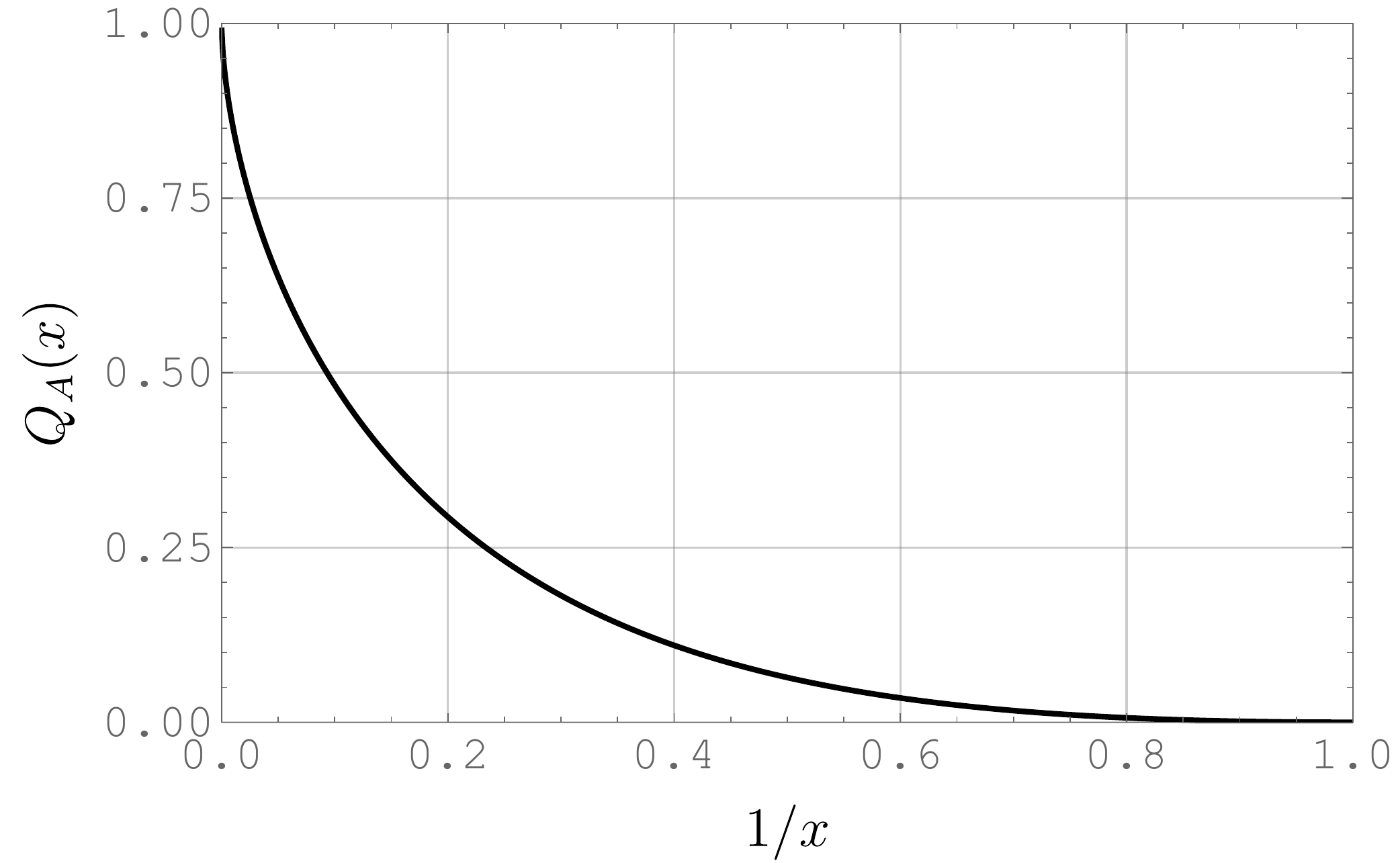}
    }
    \caption{Quasi-stationary distribution's pdf $q_A(x)$ and cdf $Q_A(x)$ as functions of $1/x$ for $A=1$.}
    \label{fig:qA+QA_A_1}
\end{figure}

Figures~\ref{fig:qA+QA_A_2},~\ref{fig:qA+QA_A_5}, and~\ref{fig:qA+QA_A_10} show the quasi-stationary pdf $q(x)$ and cdf $Q(x)$ for $A=2$, $A=5$, and $A=10$, respectively. Since $A=\{2,5,10\}$ are all higher than $A^{*}\approx1.265857361$, it follows that $\lambda=1/8$ and the corresponding pdf $q(x)$ and cdf $Q(x)$ are given by Theorem~\ref{thm:QSD-answer2}.
\begin{figure}[tph]
    \centering
    \subfloat[$q_A(x)$.]{
        \includegraphics[width=0.47\textwidth]{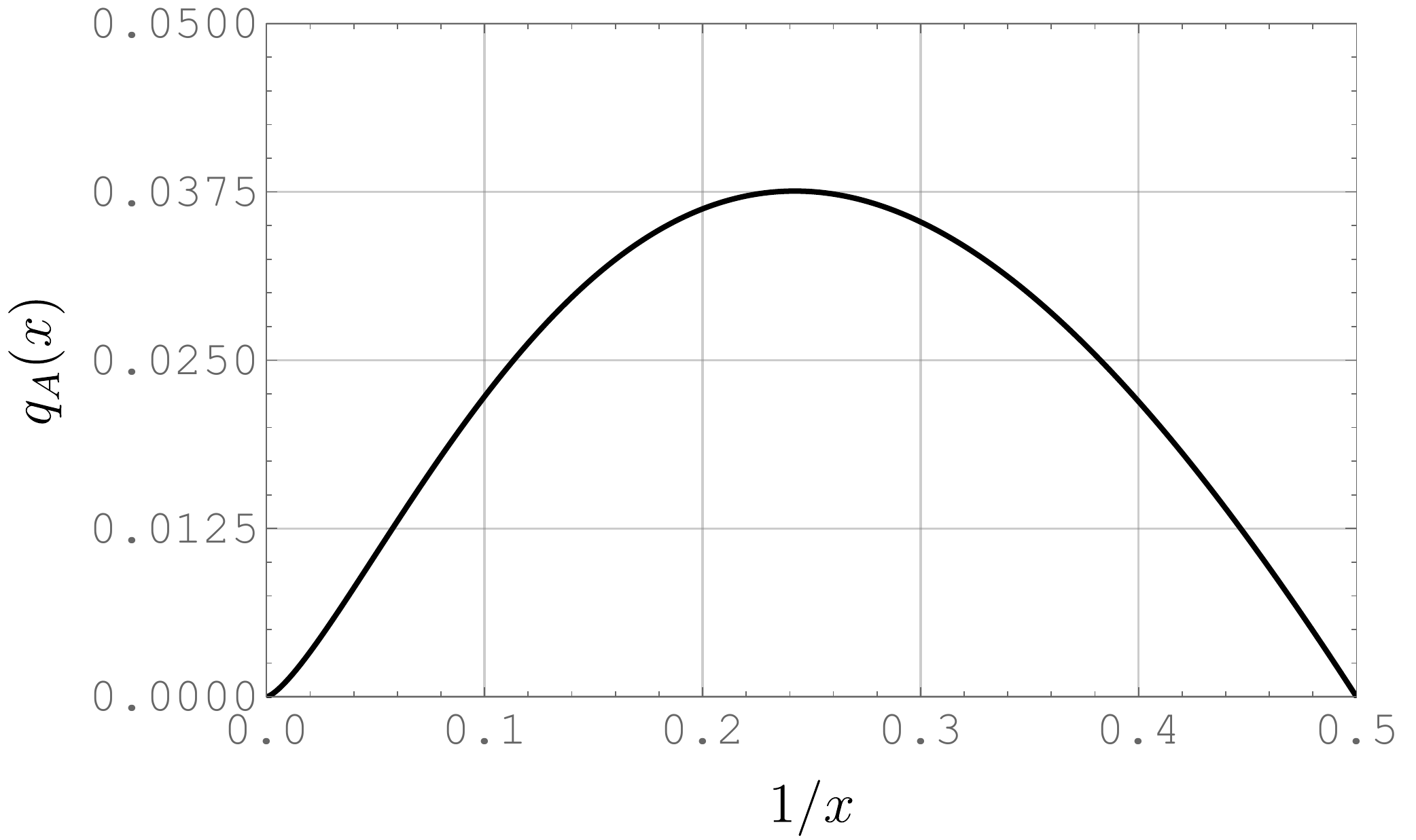}
    }
    \;
    \subfloat[$Q_A(x)$.]{
        \centering
        \includegraphics[width=0.47\textwidth]{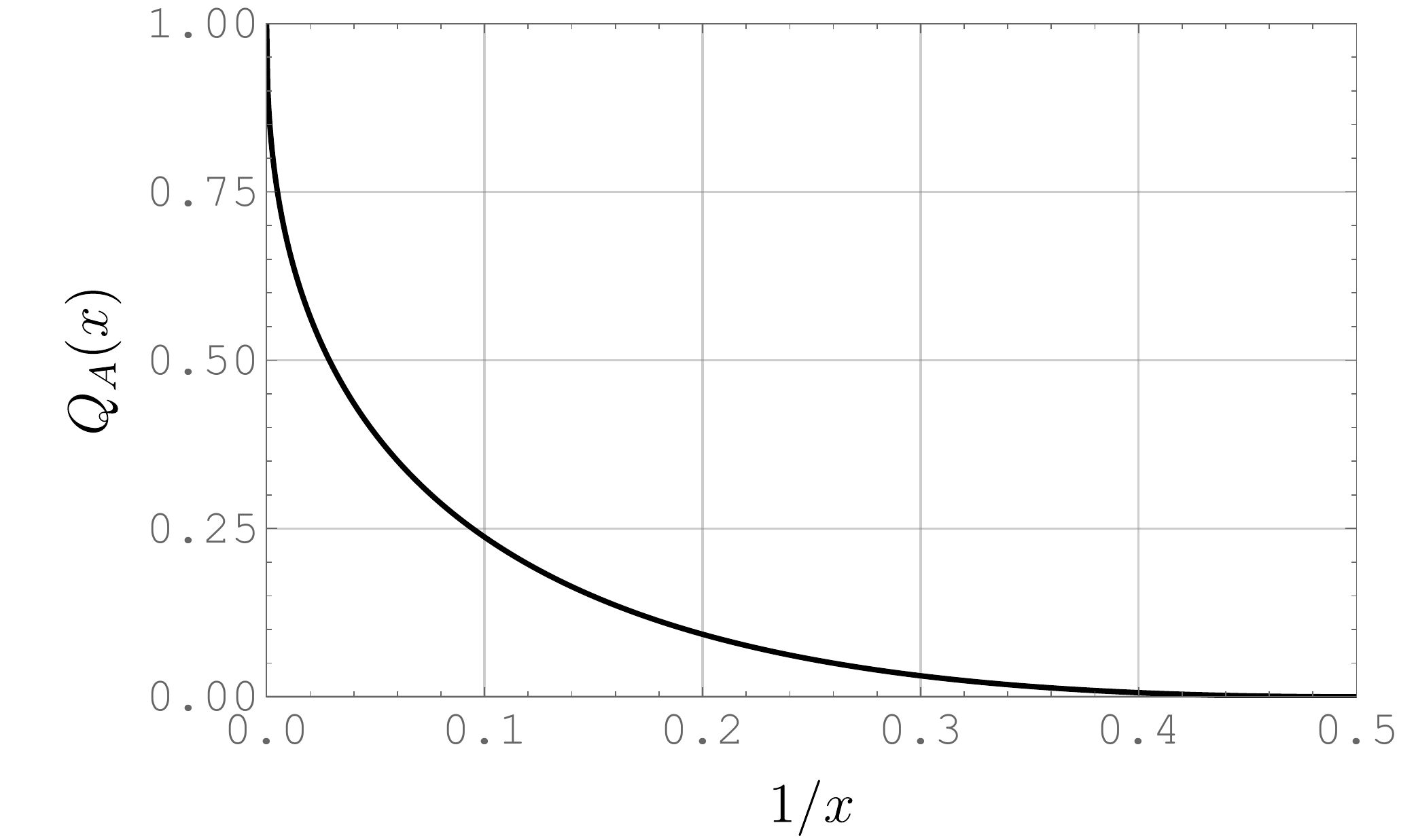}
    }
    \caption{Quasi-stationary distribution's pdf $q_A(x)$ and cdf $Q_A(x)$ as functions of $1/x$ for $A=2$.}
    \label{fig:qA+QA_A_2}
\end{figure}
\begin{figure}[tph]
    \centering
    \subfloat[$q_A(x)$.]{
        \includegraphics[width=0.47\textwidth]{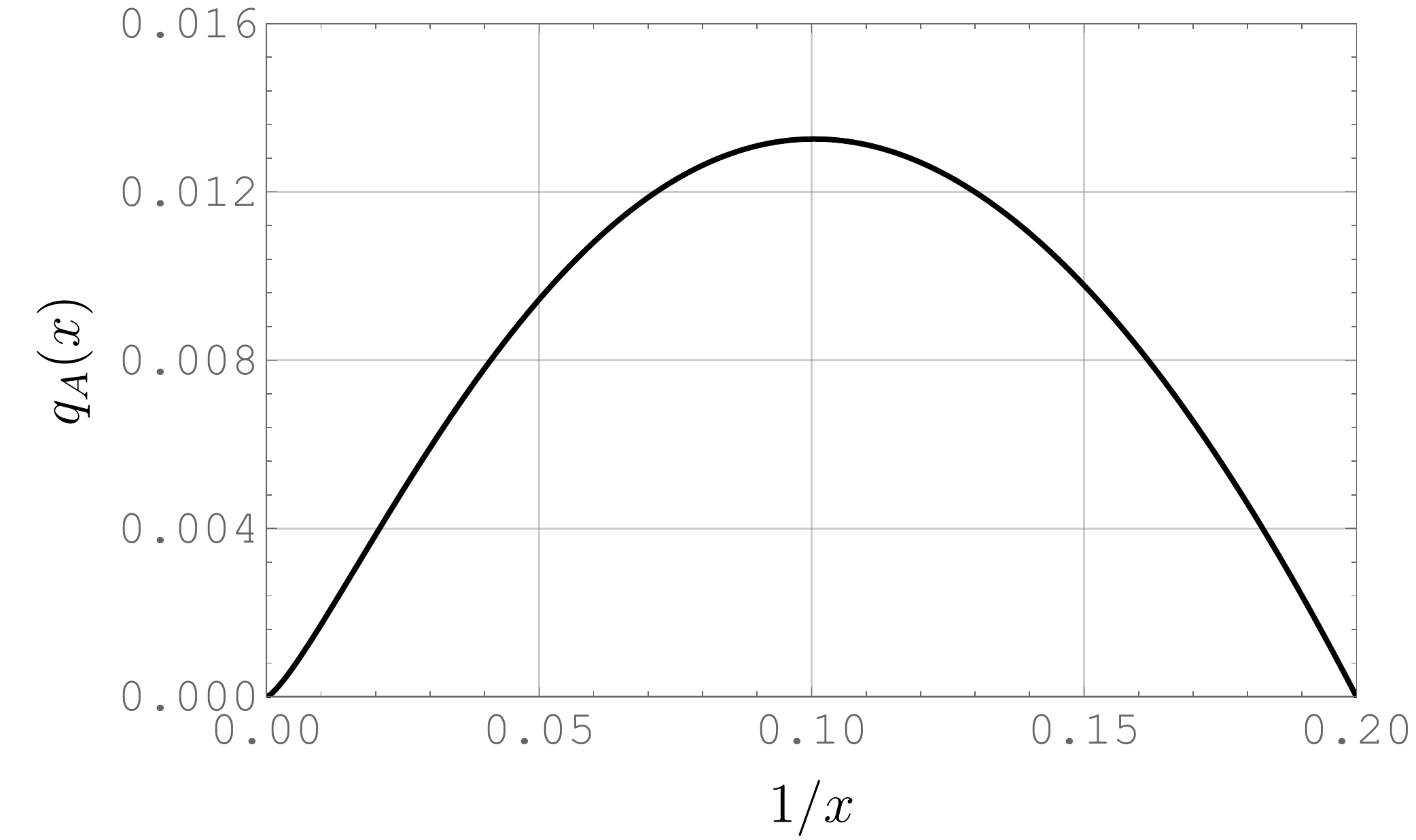}
    }
    \;
    \subfloat[$Q_A(x)$.]{
        \includegraphics[width=0.47\textwidth]{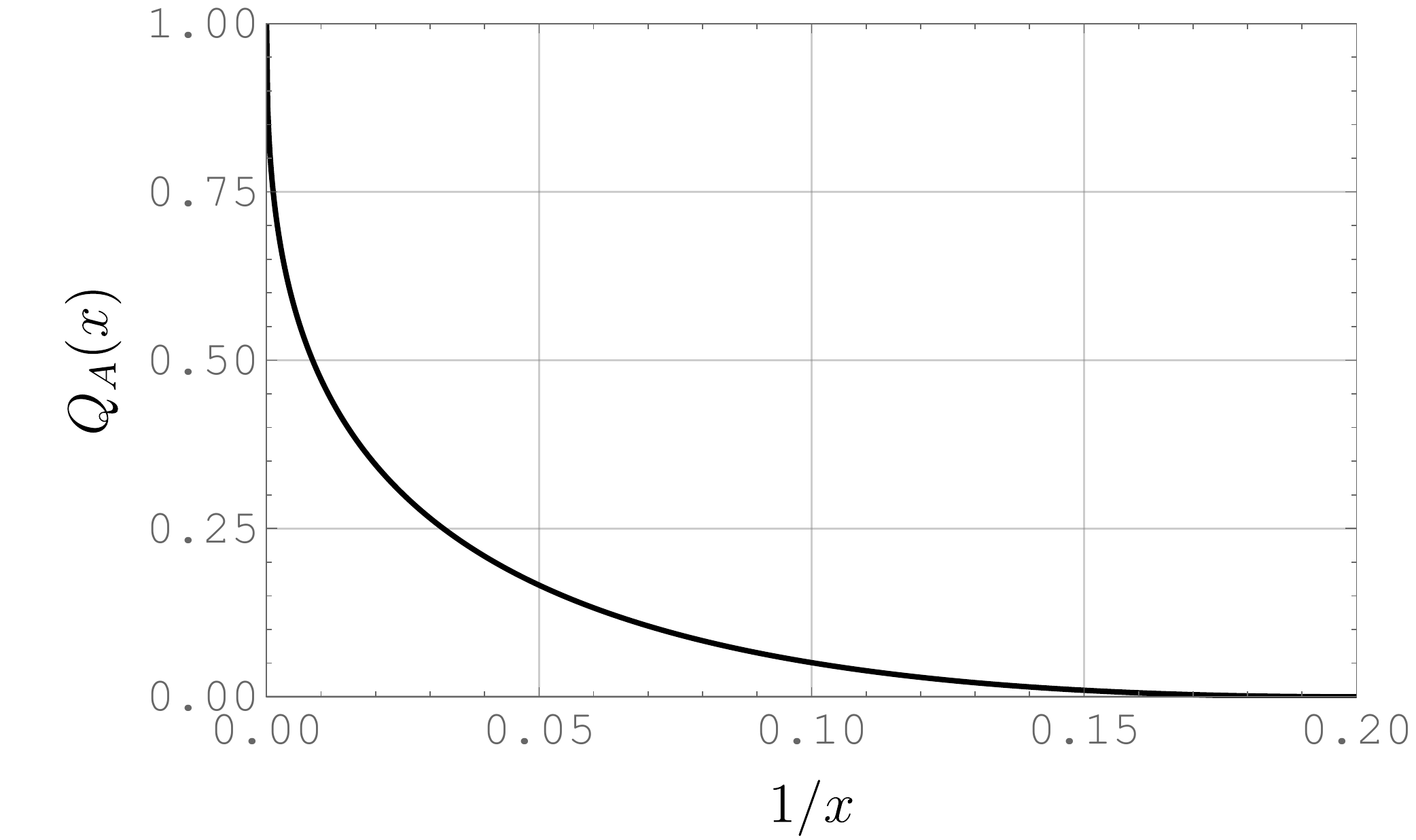}
    }
    \caption{Quasi-stationary distribution's pdf $q_A(x)$ and cdf $Q_A(x)$ as functions of $1/x$ for $A=5$.}
    \label{fig:qA+QA_A_5}
\end{figure}
\begin{figure}[tph]
    \centering
    \subfloat[$q_A(x)$.]{
        \includegraphics[width=0.47\textwidth]{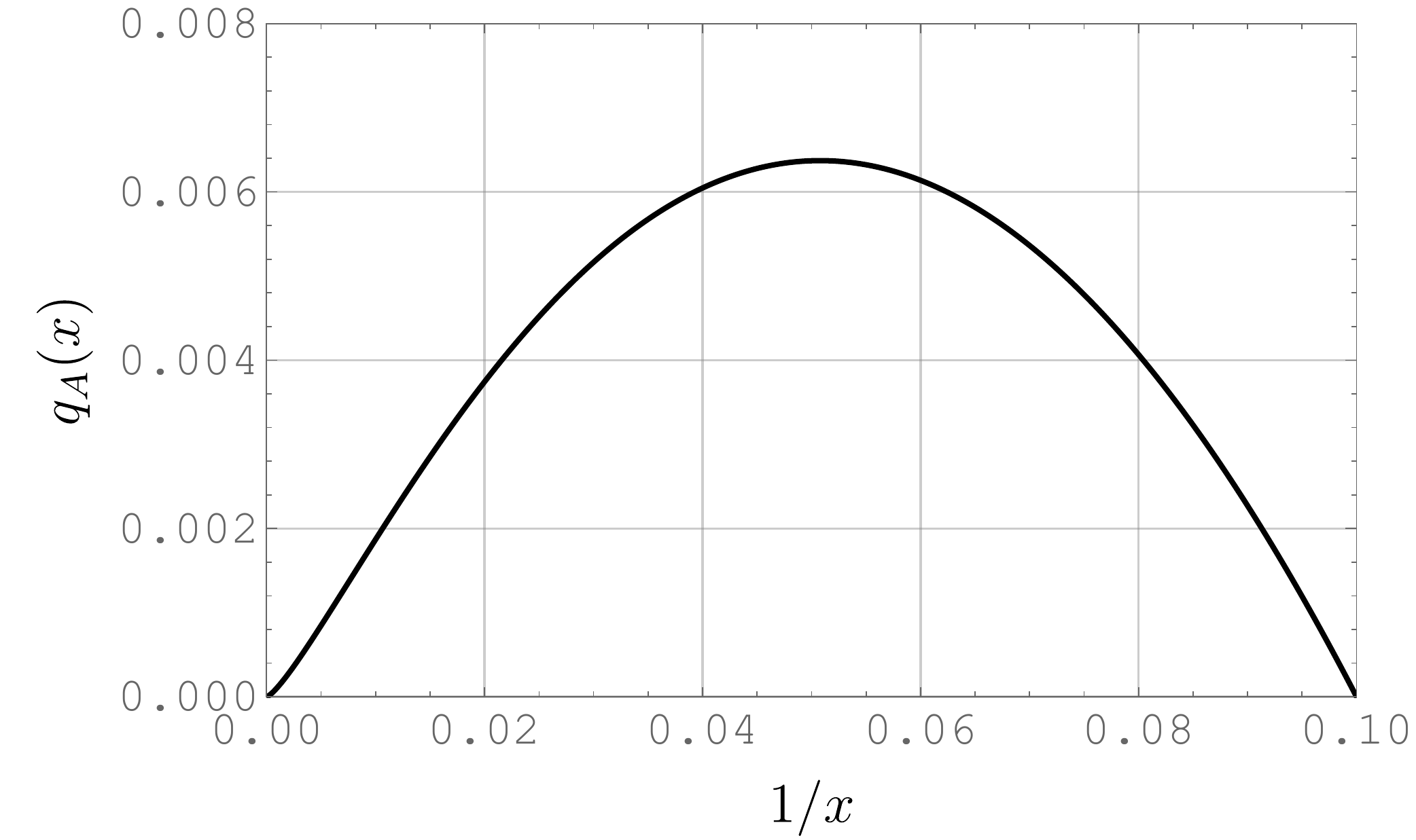}
    }
    \;
    \subfloat[$Q_A(x)$.]{
        \includegraphics[width=0.47\textwidth]{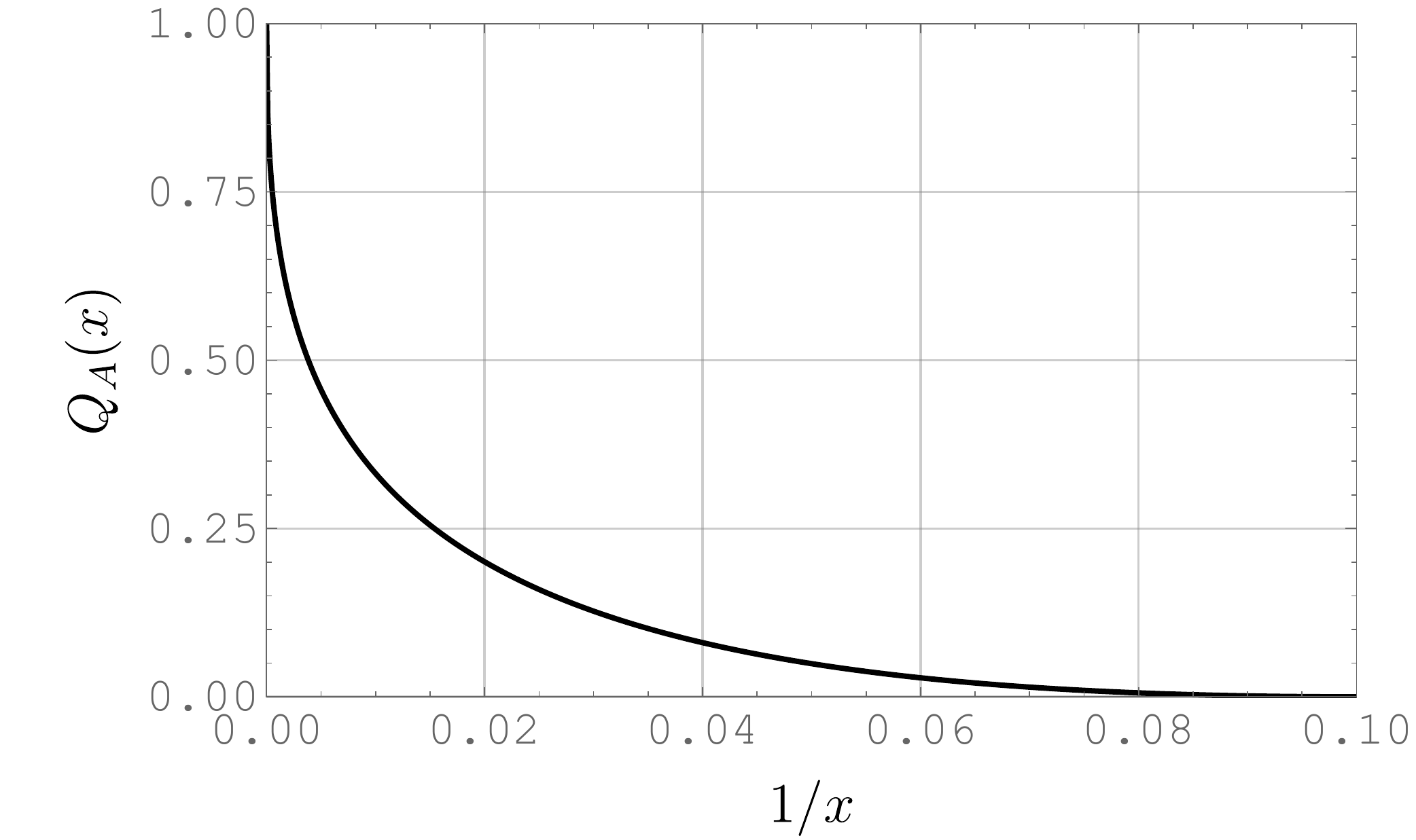}
    }
    \caption{Quasi-stationary distribution's pdf $q_A(x)$ and cdf $Q_A(x)$ as functions of $1/x$ for $A=10$.}
    \label{fig:qA+QA_A_10}
\end{figure}

To draw a line under the entire paper, we remark that formulae~\eqref{eq:QSD-pdf-gen-answer0}--\eqref{eq:norm-factor-C-answer2} actually give a whole family of quasi-stationary densities $q_A(x)$ indexed by $\lambda\in(0,\lambda_A)$ where $\lambda_A\in(0,1/8]$ is the ``bottom'' of the spectrum of the operator $\mathscr{D}$ defined in~\eqref{eq:Doperator-def}. Put another way, for any fixed $A>0$ and any fixed $\lambda\in(0,\lambda_A)$, the function $q_A(x)$ given by formulae~\eqref{eq:QSD-pdf-gen-answer0}--\eqref{eq:norm-factor-C-answer2} is ``legitimate'' pdf supported on $[A,+\infty)$, because it is nonnegative for any $x\in[A,+\infty)$ and integrates to unity over the interval $[A,+\infty)$. Indeed, first note that if $\lambda\in(0,\lambda_A)$, then $q_A(x)$ must be different from zero for all $x>A$, for otherwise $\lambda_A$ would not be the smallest eigenvalue. Therefore $q_A(x)$ is either positive or negative for all $x>A$. To see that $q_A(x)$ cannot be negative, observe that from~\eqref{eq:QSD-cdf-gen-answer} we have
\begin{equation*}
\begin{split}
\int_{A}^{+\infty}
q(y)\,dy
&=
1-C\lim_{x\to0+} \Biggl\{e^{-x}\Biggl[W_{0,\tfrac{1}{2}\xi(\lambda)}\left(2x\right)M_{1,\tfrac{1}{2}\xi(\lambda)}\left(\dfrac{2}{A}\right)+\\
&\qquad\qquad\qquad\qquad
+\dfrac{2}{\xi(\lambda)+1}\,M_{0,\tfrac{1}{2}\xi(\lambda)}\left(2x\right)W_{1,\tfrac{1}{2}\xi(\lambda)}\left(\dfrac{2}{A}\right)\Biggr]\Biggr\},
\end{split}
\end{equation*}
where we recall that $C>0$ is a constant (independent of $x$ but dependent on $A$) given by either~\eqref{eq:norm-factor-C-answer1} or~\eqref{eq:norm-factor-C-answer2}. However, according to~\eqref{eq:WhitW-near-zero-asymptotics} and~\eqref{eq:WhitM-near-zero-asymptotics}, the limit in the right-hand side of the foregoing formula is zero, because $\xi(\lambda)\in[0,1)$ whenever $\lambda\in(0,1/8]$, as can be easily seen from~\eqref{eq:xi-def}. Thus $q_A(x)$ given by~\eqref{eq:QSD-pdf-gen-answer0}--\eqref{eq:norm-factor-C-answer2} integrates to one over the interval $[A,+\infty)$. This necessitates that the sign maintained by $q_A(x)$ in the interior of this interval be positive. We therefore arrive at the curious conclusion: formulae~\eqref{eq:QSD-pdf-gen-answer0}--\eqref{eq:norm-factor-C-answer2} yield a ``legitimate'' quasi-stationary pdf $q_A(x)$ for {\em any} $\lambda\in(0,\lambda_A)\subset(0,1/8]$. However, while $\varphi(x,\lambda)$ given~\eqref{eq:eigfun-gen-form} is still an eigenfunction of $\mathscr{D}$, it satisfies the square-integrability condition only for $\lambda=\lambda_A$, i.e., $\|\psi(\cdot,\lambda)\|<+\infty$ is false, unless $\lambda=\lambda_A$. The existence such a continuum of quasi-stationary distributions was also previously predicted in~\cite[Section~7.8.2]{Collet+etal:Book2013}; see also~\cite[Corollary 6.19,~p.~144, and Theorem 6.34,~p.~157]{Collet+etal:Book2013}.


\section*{Acknowledgments}
The authors would like to thank Dr.~E.V. Burnaev of the Kharkevich Institute for Information Transmission Problems, Russian Academy of Sciences, Moscow, Russia, and Prof.~A.N. Shiryaev of the Steklov Mathematical Institute, Russian Academy of Sciences, Moscow, Russia, for their interest in and attention to this work.

\bibliographystyle{siamplain}
\bibliography{main,physics,special-functions,finance,stochastic-processes,differential-equations}

\end{document}